\renewcommand\section{\@startsection{section}{1}{\z@}%
 						{-3.5ex \@plus -1ex \@minus -.2ex}
						{2ex \@plus.2ex}
						{\large\bfseries}}
\renewcommand\subsection{\@ifstar
						{\setcounter{subsection}{\value{equation}}
					\@startsection{subsection}{2}{\z@}
                          {1.75ex \@plus.5ex \@minus.2ex}%
                           {-.4em}		
					\textit*}
					{\setcounter{subsection}{\value{equation}}
						\stepcounter{equation}
					\@startsection{subsection}{2}{\z@}
                          {1.75ex \@plus.5ex \@minus.2ex}%
                           {-.4em}		
					\textit}}
\def\@seccntformat#1{\@ifundefined{#1@cntformat}%
	{\csname the#1\endcsname\quad} 
	{\csname #1@cntformat\endcsname}} 
\def\section@cntformat{\thesection.~} 
\def\subsection@cntformat{(\thesubsection)\ }
\renewcommand*\l@section{\mdseries\small\@dottedtocline{1}{1.5em}{2em}}
\numberwithin{equation}{section}
\theoremstyle{plain}
\newtheorem{theorem}[equation]{Theorem}
\newtheorem{corollary}[equation]{Corollary}
\newtheorem{lemma}[equation]{Lemma}
\newtheorem{proposition}[equation]{Proposition}
\theoremstyle{definition}
\theoremstyle{remark}
\newtheorem{remark}[equation]{Remark}
\newcommand{\cC}{\mathscr{C}}
\newcommand{\dif}{\mathscr{D}}
\newcommand{\cK}{\mathscr{K}}
\newcommand{\cM}{\mathscr{M}}
\newcommand{\cO}{\mathscr{O}}
\newcommand{\cP}{\mathscr{P}}
\newcommand{\cS}{\mathscr{S}}
\newcommand{\frg}{\mathfrak{g}}
\newcommand{\frh}{\mathfrak{h}}
\newcommand{\frl}{\mathfrak{l}}
\newcommand{\frp}{\mathfrak{p}}
\newcommand{\fru}{\mathfrak{u}}
\newcommand{\frB}{\on{Bun}_G}
\newcommand{\frC}{\mathfrak{C}}
\newcommand{\frL}{{\on{Loc}_{^L G}}}
\newcommand{\frO}{{\on{Op}_{^L G}}}
\newcommand{\frS}{\mathfrak{S}}
\newcommand{\frU}{\mathfrak{U}}
\newcommand{\frX}{\mathfrak{X}}
\newcommand{\LD}{{}^L G}
\newcommand{\GL}{\mathrm{GL}}
\newcommand{\bC}{\mathbb{C}}
\newcommand{\bH}{\mathbb{H}}
\newcommand{\bZ}{\mathbb{Z}}
\newcommand{\vep}{\varepsilon}
\newcommand{\on}{\operatorname}
\newcommand{\hg}{\widehat{\mathfrak g}}
\newcommand{\nc}{\newcommand}
\nc{\pparz}{(\!(z)\!)}
\nc{\wh}{\widehat}
\begin{document}
\title{\textbf {Geometric Langlands Correspondence Near
    Opers}}
\author{Edward Frenkel and Constantin Teleman}
\date{May 2013}

\maketitle
\begin{center}
\emph{Dedicated to C.S.~Seshadri on his 80th birthday}
\end{center}

\begin{quote}
\abstract{
  \noindent Let $G$ be a complex, connected semi-simple Lie group,
  $\LD$ its Langlands dual group, $\frB$ the moduli stack of
  $G$-bundles on a smooth projective curve $\Sigma$ over $\bC$, $\frL$
  the moduli stack of flat $^L G$-bundles on $\Sigma$. Beilinson and
  Drinfeld have constructed an equivalence between the category of
  coherent sheaves on $\frL$ supported scheme-theoreti\-cally at
  the locus of opers and the category of $\dif$-modules on $\frB$
  admitting a certain global presentation.  We generalize it to an
  equivalence between the derived category of coherent sheaves on
  $\frL$ supported at the formal neighborhood of the locus of opers
  and the localization at $\dif$ of the derived category of
  $\dif$-modules on $\frB$ (and an appropriate equivalence of abelian
  categories).}
\end{quote}

\vskip4ex
\begin{minipage}[t]{12cm}
\end{minipage}
\vskip4ex

\section{Introduction}

The most complete and satisfying form of the geometric Langlands
correspondence for a complex reductive group $G$ and a smooth
projective complex curve $\Sigma$ has the form of an equivalence of
two categories. One of them is the derived category of quasicoherent
sheaves on $\frL$ (possibly, modified) and the other is the derived
category of $\dif$-modules on $\frB$ (possibly, modified). In the case
that $G$ is the multiplicative group, such an equivalence has been
proved independently by G. Laumon \cite{Laumon:F} and M. Rothstein
\cite{Rothstein}. For general reductive groups, this has been proposed
as a meta-conjecture by A. Beilinson and V. Drinfeld (see, e.g.,
Sects. 4.4, 4.5, 6.2 of \cite{fr} for a survey). Recently, important
progress has been made: a precise conjectural statement of such an
equivalence was proposed by D. Arinkin and D. Gaitsgory \cite{arg},
and its proof was outlined \cite{Gaits} in the simplest non-abelian
case of $G=GL_2$. However, there is still no proof known in the
general case.

In a different direction, using a link between the category of modules
over the affine Kac--Moody algebra $\hg$ at the critical level and the
category of suitably twisted $\dif$-modules on $\frB$, Beilinson and
Drinfeld obtained an equivalence between special pieces in the two
categories \cite{bd} (see Sect.\ 9.5 of \cite{fr} for a survey). On
one side, this is the category of those coherent sheaves on $\frL$
that are supported scheme-theoretically at the locus of \emph{opers}
in $\frL$ (the definition is recalled in \S3 below) and on the other
side, we have the category of $\dif$-modules $\cM$ on $\frB$ admitting
finite global presentations
\[
(\dif\otimes \cK^{-1/2})^{\oplus p} \to(\dif\otimes \cK^{-1/2})^{\oplus q} \to \cM.
\]
Here, $\cK$ is the
canonical line bundle of $\frB$. The appearance of a square root $\cK^{1/2}$ 
(related to the fact that the $\hg$-modules are at the critical level) makes 
it more convenient to switch to the category of
projective $\dif$-modules with the same curvature as $\cK^{1/2}$. We
call it the category of Spin $\dif$-modules. Note that on a smooth 
variety, these are the sheaves of modules over the twisted version
\[
\dif^s:= \cK^{1/2} \otimes_\cO \dif \otimes_\cO \cK^{-1/2}
\] 
of $\dif$, which is the algebra of differential operators on
$\cK^{1/2}$. (On a stack such as $\frB$, this is a bit more subtle.)
But, at any rate, tensoring with the chosen $\cK^{1/2}$ gives an equivalence 
between ordinary and Spin $\dif$-modules.

Note that if $G$ is not simply-connected, there are several choices of
square roots of $\cK$ over $\frB$: on each component of $\frB$, the
choices form a torsor over the $\bZ/2$-valued characters of
$\pi_1(\frB) = H^1(\Sigma; \pi_1G)$. However, a preferred choice of
$\cK^{1/2}$, the \emph{Pfaffian} of the cohomology of the universal
adjoint bundle over $\frB$, arises from a choice of a
theta-characteristic (Spin structure) on $\Sigma$: see \cite{ls} for
the construction. The same choice of theta-characteristic appears on
the Langlands dual side, in the definition of opers. The relevant
categories for different theta-characteristics are equivalent.

The construction of \cite{bd} relies on the double quotient
realization
\[
\frB \simeq G[\Sigma^o] \backslash G\pparz/G[[z]],
\]
where $\Sigma^o=\Sigma\setminus\{\sigma\}$, and on the description of the
center of a completion of $U(\hg)$ at the critical level due to
B.~Feigin and the first author \cite{ff}. We will recall some needed
definitions below, though our discussion in Sections 2 and 3 of the
background to the problem will be brief; a more detailed discussion
may be found in Part III of \cite{fr}.

It is natural to ask whether the Beilinson--Drinfeld construction
\cite{bd} could be extended to an equivalence of larger subcategories
on both sides of the Langlands correspondence. The main difficulty in
doing this is that, unlike the flag varieties of Lie groups (and
conjecturally of loop groups), the double quotient stack $\frB$ is
\emph{not} $\dif$-affine; therefore, the functor of global sections is
not faithful. Faithfulness fails even at the level of derived
categories, which are sometimes more forgiving in this respect. The
problem become even more pronounced on the Langlands dual side, where
the $\dif^s$-module $\dif^s$ is expected to correspond to the
(possibly degree-shifted, depending on one's conventions) structure
sheaf $\cO_\frO$ of the subvariety of opers in $\frL$. The functor
$\mathbf{R}\Gamma\left(\frB; \underline{\:\:\:}\right)$, on
$\dif^s$-modules corresponds to $\mathbf{R}\mathrm{Hom}(\cO_\frO;
\underline{\:\:\:})$ on $\cC oh (\frL)$, and the latter loses all
information away from $\frO$.

Nevertheless, in this note we extend the Beilinson--Drinfeld
construction to the formal neighborhood of the locus $\frO$ of $^L
G$-opers inside $\frL$. Our first step is the computation of the
derived endomorphism algebra of the structure sheaf $\dif^s$, and its
identification with the $\mathrm{Ext}$-algebra of the structure sheaf
$\cO_\frO$ of opers in $\frL$. Both algebras are strictly commutative
($A_\infty$-equivalent to strictly commutative algebras), and this
suffices to establish an (abstract) equivalence of formal deformation
theories of $\dif^s$, within the category of $\dif^s$-modules on
$\frB$, with that of $\frO$ within $\frL$. On general grounds, this
implies an (abstract) extension of the Beilinson--Drinfeld
construction to the formal neighborhoods of $\dif^s$ and $\cO_\frO$ in
the corresponding categories and derived categories. (The
$\mathrm{Ext}$-algebras are the \emph{Koszul duals}, relative to
$\dif^s$, respectively $\frO$, of the formal neighborhoods of the two
objects.)

An interesting question is to find a \emph{canonical} isomorphism
between the respective categories. Indeed, the original construction
in \cite{bd} is essentially canonical, and it intertwines the Hecke
functors on the $\frB$ side and certain functors on the $\frL$ side
(see also \cite{arg}). Recall that Hecke functors $H_{x,V}$ are
parametrized by points $x\in\Sigma$ and representations $V$ of $\LD$.
A Hecke eigensheaf is a $\dif^s$-module $\cM$ which, under the action
of $H_{x,V}$, gets tensored with the fiber at $x$ of the flat
$V$-bundle over $\Sigma$ associated to a particular $\LD$-local
system. That local system is then the ``Hecke eigenvalue'' of $\cM$, a
point in $\frL$. This pins down the Langlands correspondence at the
level of generic points of $\frL$.  Beilinson and Drinfeld identify
the algebra of global sections $\Gamma(\frB;\dif^s)$ with the
functions on $\frO$, and the resulting ``decomposition'' of the sheaf
$\dif^s$ over $\frO\subset\frL$ turns out to be nothing but the
spectral decomposition under the Hecke action.

In the present paper, our generalization of their construction is only
canonical to first order: first-order deformations of $\dif^s$ in the
category $\dif^s_{\frB}\mbox{-mod}$ are canonically identified with
those of $\frO$ in $\frL$. Since the eigensheaf quotients of $\dif^s$
correspond to points of $\frO$, we believe that they should deform to
Hecke eigensheaves to all orders,\footnote {This would be automatic if
  we knew that these eigensheaves are irreducible, but this is not
  known at present.} with their Hecke eigenvalues being the formal
paths in $\frL$ starting at points in $\frO$. Confirming this, and
showing that we thus extend the Beilinson--Drinfeld \emph{spectral}
decomposition formally to all orders, requires a finer analysis which
will be made in a follow-up paper.

Our main result in this paper, the cohomology calculation of
\S\ref{coh}, goes back some 15 years. Although we lectured on it on
numerous occasions, it took us a long time to write it up. Our
reluctance was due to the tantalizing proximity of a much stronger
result which would follow, if only some general facts about Hecke
eigensheaves were known.  Indeed, our cohomology calculation and
unobstructedness results provide the Jacobian test for \'etaleness of
the ``Hecke eigenvalue'' morphism, from the moduli stack of simple
Hecke eigensheaves to $\frL$. The categorical Langlands correspondence
predicts an isomorphism\footnote{ This prediction must be adjusted at
  the singular points of $\frL$, see \cite{arg}.}  between the moduli
stack of the former objects and $\frL$; furthermore, at the level of
points, it has been largely proved for $\mathrm{GL}(n)$ \cite{fgv,
  gaits-gln}. If we had known {\em a priori} that the aforementioned
moduli stack were locally of finite type, and that the Hecke
eigenvalue morphism were algebraic, its \'etale property near $\dif^s$
would follow.

Unfortunately, these hoped-for properties of the moduli of Hecke
eigen-$\dif^s$-modules are still unsettled. What's even worse is that
it is not known whether the Hecke eigensheaves constructed in
\cite{bd} are in fact irreducible.\footnote{A general argument can be
  used to show that a \emph{generic} module in their family is
  irreducible.}  This is why the extent of our results is not quite
what we had originally hoped for. We do have a non-trivial statement
nonetheless, the one about the formal neighborhood of opers, which we
present here.

\smallskip

It is a special pleasure to contribute this paper to a collection
honoring C.~S.~Seshadri, whose work on vector and principal bundles on
the Riemann surfaces has been a cornerstone for so much rich and
beautiful mathematics developed over the past few decades.

\smallskip

The work of E.F. was supported by DARPA under the grant
HR0011-09-1-0015 and by NSF under the grant DMS-1201335. C.T. was
supported by NSF under the grant DMS-1007255. Both authors were
supported by the NSF grant DMS-1160328.

\smallskip

\begin{remark}
  To avoid any confusion, we note that there are actually \emph{two}
  different versions of the categorical Langlands
  correspondence. Indeed, recent work by Kapustin and Witten \cite{kw}
  in 4D gauge theory also predicts an equivalence of categories, which
  is purely topological in nature and is not sensitive to the
  algebraic structure of $\Sigma$. It is different from the one we
  discuss here (and the one in \cite{arg}). The reason why there are
  are two equivalences is that there are two algebraic structures on
  $\frL$, the \emph{Betti} structure, which identifies $\frL$ with the
  representation variety of $\pi_1(\Sigma)$ in $\LD$ and the \emph{de
    Rham} structure of flat bundles (they are linked pointwise via the
  monodromy of a connection). The former is not sensitive to the
  algebraic structure of $\Sigma$, but the latter is -- and in fact,
  $\Sigma$ can be recovered from it. Correspondingly, there are two
  algebraic structures on the moduli of (regular holonomic)
  $\dif$-modules with singular support in Laumon's global nilpotent
  cone, which are related by the Riemann-Hilbert correspondence: the
  Betti structure, seeing the monodromy representation and extension
  data between the various strata, and the de Rham structure arising
  from the underlying $\dif$-module. The second is definitely
  sensitive to the algebraic structure on $\Sigma$, whereas the former
  is not believed to be so. These algebraic structures are meant to
  match pairwise under the Langlands correspondence.  Indeed, any
  natural construction of the correspondence would make such a match,
  but we don't have a natural construction of the correspondence at
  present.
\end{remark}

\section{Warm-up: the case $G=\LD=\GL(1)$}

We describe the $\GL(1)$ case of the geometric Langlands equivalence,
due to Laumon \cite{Laumon:F} and Rothstein \cite{Rothstein}, and in
the process identify the portion which corresponds to the
Beilinson--Drinfeld construction.  Here, we opt for a simplification
which ignores the global automorphism group $\GL(1)$ of line bundles
over $\Sigma$, as well as the group of components of
$\mathrm{Pic}(\Sigma)$. (These two groups are interchanged under
the Langlands duality.)

\subsection{Fourier-Mukai transform.}
Let $A$ be an Abelian variety and $A^\vee$ its dual, $\cP\to A\times
A^\vee$ the Poincar\'e bundle and $\pi, \pi^\vee$ the projections of
the product to the two factors.  The \emph{Fourier-Mukai transform}
\[
\Phi: \mathbf{D}\cC oh(A) \to \mathbf{D}\cC oh(A^\vee), \quad \Phi(\cS): = 
	\mathrm{R}\pi^\vee_*(\cP\otimes\pi^* \cS) 
\]
is an equivalence of the derived categories (with arbitrary
decorations: $+,-,b$).

Following Laumon and Rothstein, let us enhance this as follows. The
cotangent bundle $T^\vee A$ is trivial, projecting by a map $p$ to the
vector space $\mathbf{H}: = T^\vee_0A$.  There is a similar projection
$A^\vee\times \mathbf{H} \to \mathbf{H}$. We can construct a
Fourier-Mukai equivalence $\mathbf{D}\cC oh(T^\vee A) \cong \mathbf{D}
\cC oh(A^\vee\times \mathbf{H})$ \emph{relative to $p$} by the same
formula, from the correspondence diagram
\[
\xymatrix{
 & T^\vee A\times A^\vee  \ar[dl]_{T^\vee\pi} \ar[dr]^{p\times\mathrm{Id}} & \\
T^\vee A & & \mathbf{H} \times A^\vee
}
\]

\subsection{Deformation.} \label{abdef} There is a distinguished
deformation of $\mathbf{D}\cC oh (T^\vee A)$, the non-commutative
deformation of $T^\vee A$ defined by the natural symplectic form
(`quantization'). One natural implementation as a genuine deformation,
as opposed to a formal one, is the category of coherent $\dif$-modules
over $A$. There would be a $1$-parameter deformation, but the scaling
of the cotangent bundle identifies all non-zero deformed categories in
the family.

Recall now that $T_0A^\vee$ is naturally identified with the
\emph{complex conjugate} of $\mathbf{H} = T^\vee_0A$. There results a
$1$-parameter deformation $A^\flat$ of the space $A^\vee\times
\mathbf{H}$ --- this time, in the commutative world --- to an affine
bundle over $A^\vee$, classified by $\mathrm{Id}\in H^1(A^\vee;
\cO\otimes \mathbf{H})$. The key observation of Laumon and Rothstein
is that the Fourier-Mukai equivalence relative to $p$ deforms to an
equivalence between the deformed categories
$\mathbf{D}_{coh}\dif_A\mbox{-mod}$ and $\mathbf{D}\cC
oh(A^\flat)$. (Here $\mathbf{D}_{coh}\dif_A\mbox{-mod}$ is the derived
category of coherent $\dif_A$-modules.)

It is important to note that $A^\flat$ \emph{no longer} projects onto
$\mathbf{H}$; instead, the fiber of $A^\flat\to A^\vee$ over $0\in
A^\vee$ is identified with $\mathbf{H}$, as a ``section'' of the no
longer existent projection. Intrinsically, $A^\flat$ is the moduli of
\emph{flat} holomorphic line bundles on $A$, and the Poincar\'e bundle
$\cP$ deforms to the universal bundle $\cP'\to A\times A^\flat$, which
has a flat connection along the $A$-factor. The deformed
Fourier-Mukai transform uses the push-forward of the \emph{de Rham
  complex} of $\cP'\otimes \cM$:
\[
\Phi^\flat: \mathbf{D}\dif_A\mbox{-mod} \to \mathbf{D}\cC oh(A^\flat),
\quad \Phi^\flat(\cM) :=
	\mathrm{R}\pi^\vee_*
        \left(\Omega^\bullet(\cP'\otimes\pi^*\cM),d\right).
\] 
A homological algebra exercise shows that this construction becomes
the old $\Phi$ relative to $p$, when the deformation degenerates back
to the ``classical'' product case.

\subsection{The abelian Beilinson--Drinfeld construction.}   
Let now $A$ be the Jacobian $\mathrm{Pic}^0$ of our curve $\Sigma$. In
this case, $A^\flat$ can also be identified with the moduli space of
$\GL(1)$-local systems on $\Sigma$.  Note that, as $A$ is principally
polarized, $A$ is isomorphic to $A^\vee$, but this does not \emph{yet}
play a role.\footnote{Replacing $\GL(1)$ by a torus would make
  $A^\vee$ into the moduli space of principal bundles for the dual
  torus.}  Let $\frO\subset A^\flat$ be the fiber over $0$, which is
the space $H^0(\Sigma;\Omega^1)$ of differentials; let us call it the
\emph{space of $\GL(1)$-opers}.

One easily determines that $\Phi^\flat(\dif) = \cO_\frO[-\dim A]$, and that 
\begin{equation}\label{abelianend}
  \mathrm{End}_{\dif_A}(\dif) = \Gamma(A;\dif) =\mathrm{Sym}\, T_0A =
  \bC[\frO] =  \mathrm{End}_{\cC oh(\frL)}( \cO_\frO).
\end{equation}
Exploiting this and the flatness of $\dif$ over its global sections,
we obtain the following

\begin{theorem}\label{abelianbd}
\begin{enumerate}
\item Any $\dif$-module $\cM$ which admits a finite free global
  presentation $\dif^{\oplus p} \to \dif^{\oplus q} \to \cM$ has a
  finite free global $\dif$-resolution.
\item The Fourier transform $\Phi^\flat(\cM)$ of such a module is a
  coherent sheaf supported on $\frO$, shifted into degree $\dim A$.
\item $\Phi^\flat$ restricts to an equivalence between the abelian
  category of $\dif$-modules with free global presentations to $\cC
  oh(\frO)$.  Thereunder, the functor $\Gamma(A;\underline{\:\:\:})$
  corresponds to $\Gamma(\frO; \underline{\:\:\:})$.
\item The structure sheaf $\bC_\omega$ of a point $\omega\in\frO$ corresponds 
to the $\dif$-module $\dif\otimes_{\Gamma(\dif)}\bC_\omega$.
\end{enumerate}
 \end{theorem}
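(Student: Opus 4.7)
The plan is to reduce everything to commutative algebra over the polynomial ring $\bC[\frO] = \mathrm{Sym}\, T_0A$, using the three inputs already in place: the identification $\mathrm{End}_{\dif_A}(\dif) = \bC[\frO]$ from (\ref{abelianend}), the computation $\Phi^\flat(\dif) = \cO_\frO[-\dim A]$, and the flatness of $\dif$ over its ring of global sections $\bC[\frO]$ highlighted just before the theorem. Since $\frO \cong \bC^g$ with $g = \dim A$ is smooth affine of dimension $g$, Hilbert's syzygy theorem on $\bC[\frO]$ is the engine that will produce finite free resolutions, and $\Phi^\flat$ will transport these termwise to coherent resolutions on the $\cO_\frO$-side.

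For (i), $\mathrm{Hom}_{\dif_A}(\dif^{\oplus p},\dif^{\oplus q})$ is the space of $q\times p$ matrices over $\mathrm{End}_{\dif_A}(\dif) = \bC[\frO]$. Hence any given presentation $\dif^{\oplus p}\to\dif^{\oplus q}\to\cM\to 0$ is obtained by applying the functor $\dif\otimes_{\bC[\frO]}(-)$ to a corresponding presentation $\bC[\frO]^{\oplus p}\to\bC[\frO]^{\oplus q}\to N\to 0$ of a finitely generated $\bC[\frO]$-module $N$, and $\cM = \dif\otimes_{\bC[\frO]} N$ by right exactness of the tensor product. By Hilbert's syzygy theorem, $N$ admits a finite free $\bC[\frO]$-resolution of length at most $g$, and tensoring this resolution with $\dif$ over $\bC[\frO]$ (exact by flatness) yields the required finite free global $\dif$-resolution of $\cM$.

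For (ii), apply $\Phi^\flat$ to this resolution. Since $\Phi^\flat$ is a triangulated equivalence with $\Phi^\flat(\dif^{\oplus n}) = \cO_\frO^{\oplus n}[-\dim A]$, and since morphisms between free modules on either side are encoded by the same matrices over $\bC[\frO]$ via the common identification of endomorphism rings (\ref{abelianend}), the resulting complex is quasi-isomorphic to the free $\bC[\frO]$-resolution of $N$, placed in cohomological degree $\dim A$. Exactness of that resolution gives $\Phi^\flat(\cM) = N[-\dim A]$, a coherent sheaf on $\frO$ shifted into degree $\dim A$. Item (iii) follows: the restriction of $\Phi^\flat$ has quasi-inverse $N\mapsto(\dif\otimes_{\bC[\frO]} N)[\dim A]$ on the abelian subcategory, and the intertwining $\Gamma(A;-)\leftrightarrow\Gamma(\frO;-)$ reduces to the tautology that both recover the $\bC[\frO]$-module $N$, using affineness of $\frO$ on one side and the triviality of $TA$ on the other (so $\dif\cong\cO_A\otimes_\bC\bC[\frO]$ as $\cO_A$-module, whence $\Gamma(A;\dif\otimes_{\bC[\frO]} N) = \Gamma(A;\cO_A)\otimes_\bC N = N$). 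Finally, (iv) is the special case $N=\bC_\omega$.

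The main obstacle, and the conceptual crux, is the reduction in (i) that every $\dif$-module with a free global presentation is of the form $\dif\otimes_{\bC[\frO]} N$ for a finitely generated $\bC[\frO]$-module $N$. This is the one place where flatness of $\dif$ over its global sections is essential; once it is in hand, the remainder is a formal consequence of Hilbert's syzygy theorem and the two inputs $\Phi^\flat(\dif)=\cO_\frO[-\dim A]$ and (\ref{abelianend}).
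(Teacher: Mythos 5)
The paper does not spell out a proof of Theorem~\ref{abelianbd}; it simply asserts that the statement follows from the two displayed facts $\Phi^\flat(\dif)=\cO_\frO[-\dim A]$ and \eqref{abelianend}, together with flatness of $\dif$ over $\Gamma(A;\dif)$. Your proposal correctly fills in exactly that argument — identifying any globally presented $\dif$-module with $\dif\otimes_{\bC[\frO]}N$ for a finitely generated $\bC[\frO]$-module $N$, invoking Hilbert's syzygy theorem plus flatness for (i), and transporting the resulting free resolution termwise through $\Phi^\flat$ for (ii)--(iv) — so it is the intended proof, not a different one.
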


 \noindent Note, in passing, that the $\dif$-modules in the theorem
 all have \emph{trivial} underlying vector bundles.
\begin{remark}
  The $\dif$-modules $\dif\otimes_{\Gamma(\dif)}\bC_\omega$ have the
  following \emph{Hecke eigensheaf property}. A degree zero divisor
  $D\subset\Sigma$ defines a line $\omega^{D}$, the tensor product of
  the fibers over $D\subset\Sigma$ of the flat line bundle classified
  by $\omega$. Let $T_D:A\to A$ denote the translation by
  $\cO(D)$. (these translations are the abelian Hecke
  correspondences.) There exist, then, functorial isomorphisms
\[
T_D^* \left(\dif\otimes_{\Gamma(\dif)}\bC_\omega\right) 
		\equiv
                \left(\dif\otimes_{\Gamma(\dif)}\bC_\omega\right)
                \otimes \omega^D,
\]
coherently additive in $D$.
\end{remark}

\subsection{Derived version.} 
The equalities \eqref{abelianend} enhance to 
\begin{equation}\label{derivedabelianend}
  \mathbf{R}\mathrm{End}_{\dif_A}(\dif) \equiv \mathbf{R}\Gamma(A;\dif) 
  \equiv \mathrm{Sym}\,T_0A\otimes \bigwedge\nolimits^\bullet
  \overline{T_0^\vee A} \cong
  \Omega^\bullet[\frO] \cong \mathbf{R}\mathrm{End}_{\cC
    oh(A^\flat)}(\cO_\frO).
\end{equation}
We have written $\equiv$ for canonical isomorphisms; whereas, in the
isomorphisms with the algebra
$\mathrm{Sym}\,T_0A\otimes\bigwedge^\bullet T_0 A$ of differential
forms on $\frO$, we have invoked the polarization of $A$. We did this
twice, left and right: all other terms are canonically isomorphic,
because $\overline{T_0^\vee A}$ is the normal bundle to $\frO$ in
$A^\flat$.

There is more to the isomorphism \eqref{derivedabelianend} than first
meets the eye, because the two $\mathbf{R}\mathrm{End}$ complexes are
more than algebra objects in the derived categories (of vector
spaces): namely, they have natural $A_\infty$-structures, from the
(higher) Yoneda products. It is with these higher structure included
that they are isomorphic, as $A_\infty$-algebras, to the
skew-commutative algebra $\Omega^\bullet [\frO]$.\footnote{The linear
  structures on $A, A^\flat$ allow one to verify that; but it also
  follows from the fact that the Fourier transform $\Phi^\flat$
  enhances to a functor between the differential graded categories
  underlying the derived ones.}

More generally, for any $\cM\in\mathbf{D}(\dif_A\mbox{-mod})$, we
find, by invoking the results of Laumon and Rothstein, that
\begin{equation}\label{derivedabelianhom}
\mathbf{R}\mathrm{Hom}_{\dif_A}(\dif;\cM) = \mathbf{R}\Gamma(A;\cM) = 
	\mathbf{R}\mathrm{Hom}_\frL\left(\cO_\frO;
          \Phi^\flat(\cM)\right)[\dim A].
\end{equation}
Local Serre duality identifies the latter with the global sections of
the derived restriction of $\Phi^\flat(\cM)$ to $\frO$. In this sense,
the functor $\mathbf{R}\Gamma$ on $\mathbf{D}_{coh} \dif_A\mbox{-mod}$
corresponds to the restriction to $\frO$ in $\mathbf{D}\cC
oh(A^\flat)$ (followed by global sections, which however do no
``damage'' as $\frO$ is affine).

The derived version of the abelian Beilinson--Drinfeld correspondence
restricts the Fourier transform $\Phi^\flat$ to the part which can be
gleaned from \eqref{derivedabelianend}, \eqref{derivedabelianhom}.
There are variations of the statement: we can use the abelian
categories or the derived categories, or we can use subcategories ---
thick closures of $\dif$ or $\cC oh_\frO$, respectively --- or
localize the categories at the respective objects.

\begin{theorem}
\begin{enumerate}
\item $\Phi^\flat$ restricts to an equivalence of triangulated
  categories, from the thick closure of $\dif$ in
  $\mathbf{D}(\dif_A\mbox{-mod})$ to that of $\cO_\frO$ in
  $\mathbf{D}\left(\cC oh(\frL)\right)$.
\item Restricting to the corresponding abelian categories, we get an
  equivalence between the categories of nilpotent deformations of
  objects in Theorem~\ref{abelianbd},(iii).
\item The same holds for the localizations at $\dif$ and $\cO_\frO$ of
  the abelian or derived categories of $\dif_A$ and $\cC
  oh(\frL)$-modules, respectively.
\end{enumerate}
\end{theorem}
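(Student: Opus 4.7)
The plan is to deduce all three parts from the Laumon--Rothstein equivalence $\Phi^\flat:\mathbf{D}(\dif_A\mbox{-mod})\xrightarrow{\sim}\mathbf{D}(\cC oh(A^\flat))$ together with the identification $\Phi^\flat(\dif) = \cO_\frO[-\dim A]$ recalled in the abelian Beilinson--Drinfeld subsection, and the compatibility of morphism complexes encoded in \eqref{derivedabelianend} and \eqref{derivedabelianhom}. No further cohomological input is needed; the theorem is a categorical repackaging of these facts.

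For part (i), any triangulated equivalence sends the thick closure of an object to the thick closure of its image, and the shift $[-\dim A]$ does not change a thick closure. So $\Phi^\flat$ restricts to an equivalence between the thick closure of $\dif$ and that of $\cO_\frO$. For part (ii), I would compose $\Phi^\flat$ with $[\dim A]$ so that $\dif$ is sent to $\cO_\frO$ in degree zero, and identify the category of nilpotent deformations of $\dif$ with the full subcategory of iterated self-extensions of $\dif$ in $\dif_A\mbox{-mod}$, and similarly for $\cO_\frO$ in $\cC oh(\frL)$. These subcategories lie in the hearts of the natural t-structures on the respective thick closures because $\mathrm{Ext}^{<0}(\dif,\dif)=0$ and $\mathrm{Ext}^{<0}(\cO_\frO,\cO_\frO)=0$, both visible in \eqref{derivedabelianend}. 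The shifted Fourier transform preserves iterated self-extensions through the Ext-algebra isomorphism, so the abelian equivalence follows. For part (iii), localization of an abelian or triangulated category at an object is a purely categorical construction preserved under equivalences, so the statement is inherited directly from (i) and (ii).

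The main obstacle lies in part (ii), where one must confirm that the \emph{abelian} categories of nilpotent deformations are correctly matched. Concretely, this requires tracking the degree shift $[\dim A]$, checking that the respective hearts restrict compatibly to the thick closures on each side, and verifying that an iterated self-extension of $\dif$ in $\dif_A\mbox{-mod}$ is genuinely sent by $\Phi^\flat[\dim A]$ to an iterated self-extension of $\cO_\frO$ in $\cC oh(\frL)$. The argument is conceptually transparent once the negative-Ext vanishings are in hand, but the bookkeeping with the shift and the identification of hearts is where careful attention is required.
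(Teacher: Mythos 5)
The paper itself gives no explicit proof for this theorem: it is stated as the formal upshot of the Laumon--Rothstein equivalence $\Phi^\flat$, the identification $\Phi^\flat(\dif)=\cO_\frO[-\dim A]$, and the matching of (derived) endomorphism algebras in \eqref{derivedabelianend}--\eqref{derivedabelianhom}, exactly as in your proposal. So your approach is the same as the one the authors intend, and parts (i) and (iii) are indeed the purely categorical formalities you describe.

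One small imprecision to flag in your treatment of part (ii). You identify ``nilpotent deformations'' with \emph{iterated self-extensions of the single object $\dif$} (resp.\ $\cO_\frO$). The remark in the paper immediately after the theorem makes the intended meaning clearer: on the $\frL$ side the relevant abelian subcategory consists of \emph{all successive extensions of $\cO_\frO$-modules}, i.e.\ coherent sheaves supported on some finite-order neighborhood of $\frO$. This is larger than the class of iterated self-extensions of $\cO_\frO$ itself: it includes arbitrary $\cO_\frO$-modules (quotients of $\cO_\frO^{\oplus n}$) and their extensions, matching on the $\dif$ side the $\dif$-modules with free global presentations of Theorem~\ref{abelianbd}(iii) together with their nilpotent thickenings. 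The argument you give goes through with this corrected description, because these abelian subcategories are still generated by $\dif$ and $\cO_\frO$ under quotients, subobjects and extensions, and the shifted functor $\Phi^\flat[\dim A]$ already restricts to an exact equivalence on the generating abelian categories by Theorem~\ref{abelianbd}(iii) (this is what supplies the $t$-exactness you were worried about, rather than the bare vanishing of negative Exts). With that substitution, your proposal is correct and matches the paper's implicit reasoning.
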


\begin{remark}
  Recall that the localization ${}_x\frC$ of a ($\bC$-linear) category
  $\frC$ at an object $x$ is obtained by inverting morphisms
  $\varphi:y\to z$ which induce isomorphisms $\varphi_*:
  \mathrm{Hom}(x,y) \to \mathrm{Hom}(x,z)$. The functor ${}_x\frC \to
  Mod-\mathrm{End}(x)$, $y\in \mathrm{Ob}(\frC)\mapsto
  \mathrm{Hom}(x,y)$, is faithful; often, as in our examples, it can
  be made full and essentially surjective, with the right finiteness
  conditions on $\mathrm{End}(x)$-modules. Thus, $\cC oh(\frL)$
  localized at $\cO_\frO$ is equivalent to the category of coherent
  sheaves on the formal neighborhood of $\frO$ in $\frL$.  On the
  other hand, the thick closure of $\cO_\frO$ in $\cC oh(\frL)$
  comprises all successive extensions of $\cO_\frO$-modules in $\cC
  oh(\frL)$, the coherent sheaves supported in some finite-order
  neighborhood of $\frO$.
\end{remark}

\section{The constructions of Hitchin and of Beilinson--Drinfeld}
We now recall the (semi-)classical construction of N. Hitchin
\cite{hit} and its quantization due to Beilinson--Drinfeld. One
substantial difference with the abelian case is the stack nature of
the moduli $\frB$ of $G$-bundles, which cannot be avoided because
different $G$-bundles have different automorphism groups. (We cannot
restrict ourselves to the moduli spaces of semi-stable bundles because
they are not preserved by the Hecke correspondences, which are
essential for the formulation of the geometric Langlands
correspondence.)


\subsection{The cotangent stack.}\label{cotstack}
The ``classical'' version of $\dif$ (or of any of its twisted
versions) on a manifold $X$ is the sheaf of $\cO_X$-algebras
$\mathrm{Sym}\, T$. For a smooth stack $\frX$, which is locally a
quotient of a smooth manifold by an algebraic group, $T$ is a complex
defined up to a quasi-isomorphism, and there is the corresponding
differential graded version of $\mathrm{Sym}\,T$. Locally, if $\frX =
X/L$, a model for the tangent complex $T\frX$ is $[\frl
\xrightarrow{\:\alpha\:} TX]$ (in degrees $-1$ and $0$) and the
infinitesimal action map $\alpha$. Accordingly,
\[
\mathrm{Sym}^r\, T\frX \cong
\left(\bigoplus_{s+t=r}\textstyle{\bigwedge}^{s}\frl\otimes
  \mathrm{Sym}^t\,TX, \partial_\alpha\right).
\]
The differential induced from $\alpha$ makes the total $\mathrm{Sym}$
into a differential graded algebra.  The \emph{cotangent stack}
$T^\vee\frX:=\mathrm{Spec}_\frX\,\mathrm{Sym}\,T\frX$ is a derived
stack (in vector spaces) over $\frX$. Different presentations of
$\frX$ give equivalent models for $T^\vee\frX$.


This applies for instance to the stack $\frX=\frB$, with some bad and
some good news.
\begin{itemize}
\item Bad news: $\frB$ may only be presented as a quotient locally (on
  substacks of finite type).
\item Good news: when $G$ is semi-simple and $\Sigma$ has genus $2$ or
  more, $\mathrm{Sym}\,T\frB$ is quasi-isomorphic to its degree zero
  (top) cohomology; so it is not truly a derived stack.
\end{itemize}
The good news overrides the bad, as it gives a strict model for
$T^\vee\frB$ over $\frB$: we need not delve into coherent systems 
of quasi-isomorphisms in patching the local descriptions together. 

\medskip

\emph{ Henceforth, unless specified otherwise, we will assume that $G$
  is semi-simple and the genus of $\Sigma$ is $2$ or more.}

\subsection{The Hitchin map.} 
Hitchin \cite{hit} constructed a morphism $\chi$ from $T^\vee\frB$ to
a ${\mathbb Z}$-graded vector space $\mathbf{H}$, generalizing the
projection $p:T^\vee A \to \mathbf{H}$ we used in \S2 for
$G=\GL(1)$. (Hitchin originally introduced $\chi$ for the moduli
\emph{space} $[T^\vee\frB]$ of stable objects associated to
$T^\vee\frB$, the stable \emph{Higgs bundles}; but extending it to the
stack is straightforward.)  The space $\mathbf{H}$, nowadays called
the \emph{Hitchin base}, is isomorphic to
\[
\mathbf{H} = \bigoplus_{m\in\exp(\frg)} \mathbf{H}^{(m+1)} \simeq
\bigoplus_{m\in\exp(\frg)} H^0\left(\Sigma; K_\Sigma^{\otimes
    (m+1)}\right),
\]
with the direct sum ranging over the exponents of $\frg$. This
isomorphism is not canonical and involves the choice of a set of
graded generators of the space of invariant functions on the Lie
algebra ${\mathfrak g}$.\footnote{It is easy to describe $\mathbf{H}$
  canonically in terms of the graded vector space
  $\on{Spec}(\bC[{\mathfrak g}]^G)$, but the above description will
  suffice for us.}  The fiberwise ${\mathbb G}_m$-action on
$T^\vee\frB$ is compatible with the ${\mathbb G}_m$-action on
$\mathbf{H}$ defined by the ${\mathbb Z}$-grading.  Hitchin proved
that $\chi$ was flat, that its generic fiber within the neutral
component of $[T^\vee\frB]$ was an abelian variety, and that it was a
torsor over it within every other component; finally, that these
fibers were Lagrangian for the natural symplectic structure on
$[T^\vee\frB]$. These results were extended in \cite{bd} from the
moduli space of stable bundles to the stack $T^\vee\frB$. One
important distinction here is the appearance of a gerbe with band
$Z(G)$ over the regular fibers of $\chi$ in the stack $T^\vee\frB$,
coming from global automorphisms of $G$-bundles.

The map $\chi$ has a section, which depends on the choice of a Spin
structure on $\Sigma$.  This section is the classical limit of the
Lagrangian variety of opers, recalled below, and is relevant to the
Langlands dual Hitchin fibration ${}^L\chi:T^\vee\frB(\Sigma;\LD) \to
{}^L\mathbf{H}$. (Remarkably enough, the bases $\mathbf{H}$ an
${}^L\mathbf{H}$ can be identified, once we choose an invariant
quadratic form $k$ on $\frg$.)

For $G=\GL(1)$, this slice is the fiber $0\times \mathbf{H}$ from the
previous section. For $G=\mathrm{SL}(2)$, the slice sends
$\mathbf{h}\in H^0(\Sigma; K_\Sigma^{\otimes 2})$ to the pair
consisting of the bundle $E:= K_\Sigma^{1/2}\oplus K_\Sigma^{-1/2}$
and the cotangent vector (``Higgs field")
\[
\left[\begin{array}{cc} 0 & \mathbf{h} \\ 1 & 0\end{array}\right]:
E\to E\otimes K_\Sigma.
\]
For general $\frg$, the slice can be constructed using the principal
embedding $\mathfrak{sl}(2) \to \frg$ and the Kostant slice for the
adjoint action.

Choose now a non-degenerate invariant quadratic form $k$ on $\frg$,
and a corresponding ample line bundle $\Theta_k$ on $\frB$. (If
$\frg$ is a simple Lie algebra, then the form $k$ is unique up to
scalar; there can be some torsion ambiguity in defining $\Theta_k$,
but this will not be relevant.)  Hitchin proved the following for the
moduli space of stable Higgs bundles. With our assumptions on $G$ and
$\Sigma$, the codimension of the unstable part of the stack is too
high to have an effect, so we state the result directly for the stack.

For each $\gamma \in \pi_1 G$, let $\frB^\gamma$ be the corresponding
component of $\frB$.

\begin{theorem}[\cite{hit}]\label{hitchinthm}
Assume that $G$ is simply-connected. Then,
\begin{enumerate} 
\item All global functions on $T^\vee\frB^\gamma$ are lifted from the
  base: $H^0(T^\vee\frB^\gamma;\cO) \cong \bC[\mathbf{H}]$.
\item $H^1(T^\vee\frB^\gamma;\cO) \cong \bC[\mathbf{H}]\otimes
  \mathbf{H}^\vee \cong \Omega^1[\mathbf{H}]$.
\item Specifically, the generators of $H^1$ over $H^0$ arise from
  $H^0$ by contracting the vector fields defined by the linear
  Hamiltonians with $c_1(\Theta_k)\in H^1(T^\vee\frB;\Omega^1)$.
\end{enumerate}
\end{theorem}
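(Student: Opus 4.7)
The plan is to analyze all three statements through the Hitchin fibration $\chi:T^\vee\frB\to\mathbf{H}$. Since $\mathbf{H}$ is affine, the Leray spectral sequence degenerates on global sections and gives $H^i(T^\vee\frB;\cO)=H^0(\mathbf{H};R^i\chi_*\cO)$, so everything reduces to computing $R^0\chi_*\cO$ and $R^1\chi_*\cO$. A preliminary step is to transfer the computation from the stack $T^\vee\frB$ to the semistable (Higgs) locus $[T^\vee\frB]^{ss}$: since $G$ is semisimple and $\mathrm{genus}(\Sigma)\geq 2$, the unstable part has codimension at least $2$ in $T^\vee\frB$, so cohomology of $\cO$ in degrees $\leq 1$ is unchanged. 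On the semistable locus $\chi^{ss}$ is proper and flat, with generic fibers the principally polarized abelian varieties of Hitchin's integrable system.

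For (i), connectedness of the fibers of $\chi^{ss}$ combined with properness and flatness gives $\chi^{ss}_*\cO=\cO_\mathbf{H}$ by Zariski's connectedness theorem, and taking global sections over the affine base yields $\bC[\mathbf{H}]$. For (ii), the cleanest route is to construct the isomorphism directly, guided by the formula in (iii). For $\xi\in\mathbf{H}^\vee$, the linear Hitchin Hamiltonian $\chi^*\xi$ generates a Hamiltonian vector field $X_\xi$ on $T^\vee\frB$ tangent to the fibers of $\chi$; contracting the Atiyah class $c_1(\Theta_k)\in H^1(T^\vee\frB;\Omega^1_{T^\vee\frB})$ with $X_\xi$ produces a class in $H^1(T^\vee\frB;\cO)$, and one assembles a $\bC[\mathbf{H}]$-linear map
\[
\phi:\bC[\mathbf{H}]\otimes\mathbf{H}^\vee\longrightarrow H^1(T^\vee\frB;\cO),\qquad f\otimes\xi\mapsto f\cdot\iota_{X_\xi}c_1(\Theta_k).
\]
This is precisely the map appearing in (iii) and simultaneously the candidate isomorphism for (ii).

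The remaining task is to show $\phi$ is an isomorphism, which I would do fiberwise on $\mathbf{H}$. Over a generic $b\in\mathbf{H}$ the fiber $A_b$ is a Lagrangian abelian subvariety of $T^\vee\frB$, and the Lagrangian structure canonically identifies $T_0A_b\cong T^\vee_b\mathbf{H}=\mathbf{H}^\vee$, so the translation-invariant vector fields $X_\xi|_{A_b}$ run over $T_0A_b$ as $\xi$ varies in $\mathbf{H}^\vee$. The restriction $\Theta_k|_{A_b}$ carries the principal polarization of $A_b$, and the classical formula for the polarization isomorphism $T_0A_b\xrightarrow{\sim}H^1(A_b;\cO_{A_b})$ is precisely $X\mapsto\iota_Xc_1(\Theta_k|_{A_b})$. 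Upper semicontinuity of $h^1$ in the flat family, together with the fact that the discriminant of $\chi$ has codimension at least $2$ in $\mathbf{H}$ (again by the semisimplicity and genus hypotheses), promotes the generic fiberwise isomorphism to a global identification $R^1\chi_*\cO\cong\cO_\mathbf{H}\otimes\mathbf{H}^\vee$. Taking global sections over the affine $\mathbf{H}$ yields (ii), and (iii) is then read off from the construction of $\phi$.

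The principal obstacle is this last fiberwise verification, which rests on two facts whose careful justification carries most of the weight: that the Hitchin fibration on the stack $T^\vee\frB$ is a Lagrangian integrable system with principally polarized abelian fibers carrying $\Theta_k|_{A_b}$ as the polarization (the extension of Hitchin's original results from the moduli space to the stack, as in \cite{bd}), and that the Atiyah class of $\Theta_k$, pulled back to $T^\vee\frB$ from the non-separated stack $\frB$, behaves as expected under contraction with the vertical Hamiltonian vector fields $X_\xi$. The codimension estimates needed to pass between the stack, the semistable moduli space, and the smooth locus of $\chi$ are standard under our hypotheses, but must be laid out carefully.
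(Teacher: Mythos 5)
The paper does not actually supply a proof of this theorem: it is cited from Hitchin~\cite{hit}, with only the remark that the codimension of the unstable locus is high enough to allow the statement to be transported from the moduli space of stable Higgs bundles to the stack. The computation the authors \emph{do} carry out, Theorem~\ref{classical}, generalizes this to all of $H^\bullet(T^\vee\frB;\cO)$ and proceeds by a completely different route: the double-coset presentation $\frB=G[[z]]\backslash G((z))/G[\Sigma^o]$, a Chevalley/Koszul model for $\mathrm{Sym}\,T$, a K\"unneth-type factorization of the $E_1$ page, and input from \cite{fht} on the cohomology of the thick flag variety. Moreover, that proof explicitly uses Theorem~\ref{hitchinthm} as input at the end (to conclude survival of generators at $E_\infty$), so your geometric approach via the Hitchin fibration is genuinely independent of the paper's machinery and, in spirit, is closer to what Hitchin himself did.

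That said, there is a concrete gap in your argument that must be repaired before it proves (ii). You assert that ``the discriminant of $\chi$ has codimension at least $2$ in $\mathbf{H}$,'' but this is false: the discriminant of the Hitchin fibration is a \emph{divisor} in $\mathbf{H}$. Already for $G=\mathrm{SL}(2)$, where $\mathbf{H}=H^0(\Sigma;K_\Sigma^{\otimes 2})$ and the spectral curve $\{y^2=\mathbf{h}\}$ degenerates precisely where $\mathbf{h}$ has a multiple zero, the discriminant is cut out by a single (non-trivial) equation. Consequently you cannot promote the generic fiberwise identification of $R^1\chi_*\cO$ with $\cO_\mathbf{H}\otimes\mathbf{H}^\vee$ by a codimension-two extension argument, and upper semicontinuity of $h^1$ only gives $h^1\geq\dim\mathbf{H}$ on the discriminant fibers, not equality. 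What is actually needed is that $R^1\chi_*\cO$ remain locally free across the discriminant, i.e.\ that $h^1(\chi^{-1}(b);\cO)$ does not jump on the singular fibers; this is part of the substantive content of the theorem, not something that follows from the smooth part alone. Once that (and the base-change hypotheses needed for Grauert's criterion) is established, your map $\phi$ --- which is globally defined and generically an isomorphism between locally free sheaves of the same rank --- would indeed be an isomorphism, and the argument would go through. You should also note that the passage from the stack $T^\vee\frB$ to the semistable locus involves an excision/vanishing-of-local-cohomology argument on a stack of infinite type, which is carried out in detail in the paper's Lemma~\ref{finite} and is not merely a codimension count.
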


\begin{remark}
  The tangent bundles to the regular fibers of $\chi$ may be
  identified with $\mathbf{H}^\vee$ using the symplectic structure on
  $T^\vee\frB$. The fibers of $R^1\chi_*(\cO)$ are therefore
  isomorphic to $\overline{\mathbf{H}}$; we identify them with
  $\mathbf{H}^\vee$ by using $c_1(\Theta_k)$.
\end{remark}

The result on $H^0$ immediately implies an equivalence between the
abelian category of $\cO$-modules on $T^\vee\frB$ with a finite global
presentation by $\cO$ and the abelian category of coherent sheaves
with support on the Hitchin slice in $T^\vee\on{Bun}_{\LD}$. The
components of $T^\vee\frB$ labeled by $\pi_1G$ get interchanged with
the characters of the $Z(\LD)$-gerbe over the slice. In
Theorem~\ref{classical} below, we will generalize Hitchin's result to
all cohomologies; this will affirm the derived versions of this
equivalence.

\subsection{Deformations.} Hitchin's construction extends to
deformations of the stacks $T^\vee\frB$ and $T^\vee\on{Bun}_{\LD}$ in
the manner analogous to \S\ref{abdef}.  It is easier to describe the
analogue of $A^\flat$, in which the connected components of the stack
$T^\vee\on{Bun}_{\LD}$, fibered in vector spaces over the components
of $\on{Bun}_{\LD}$, deform into fibrations in \emph{affine} spaces,
according to the class $[k^{-1}]\in
H^1\left(\on{Bun}_{\LD};\Omega^1\right)$. (Recall that this group is
also $H^2\left(\frB(\Sigma;\LD);\bC\right)$, and the quadratic form
$k^{-1}$ on ${}^L\frg$ dual to $k$ on $\frg$ defines a class in the
latter.)  The total stack of this relative affine deformation is
isomorphic to the stack $\frL$ of $\LD$-local systems.

The relevant non-commutative deformation of $T^\vee\frB$ converts
$\mathrm{Sym}\,T$ into the sheaf $\dif^s$ of Spin differential
operators\footnote{One can deform to any twisted form of
$\dif$, but only the Spin version gives an interesting answer to the
questions we are interested in.} (differential operators on $\cK^{1/2}$). 
This has an increasing filtration whose associated graded is
$\mathrm{Sym}\,T\frB$.

\subsection{Technical aside on $\dif$ of a stack.} \label{diffstack} A
(derived) category of $\dif$-modules on an Artin stack $\frX$ can be
defined in a number of ways.  We can use crystals, or (complexes of)
$\cO$-modules over the de Rham site $dR(\frX)$ \cite{st}.  The sheaf
$\dif_\frX$ of differential operators is more peculiar. It is a
distinguished object in $\mathbf{D}(\dif_\frX\mbox{-mod})$; in a local
presentation $\frX=X/L$, a model for its underlying $\cO$-module is
the Chevalley complex of $\dif_X$ and the Lie algebra $\frl$. This has a
natural increasing filtration, whose associated graded is
$\mathrm{Sym}\, [\frl\xrightarrow{\alpha} TX]$ of \S\ref{cotstack}.

This $\dif_\frX$ is \emph{not} a sheaf of algebras over the smooth
site of $\frX$. Even for a manifold $X$, the extension of $\dif_X$ to
the smooth site by pull-backs is \emph{not} a sheaf of
\emph{algebras}. However, over any open \emph{substack}
$\frU\subset\frX$, $H^\bullet (\frU; \dif)$ is an algebra, isomorphic
to $\mathrm{Ext}^\bullet_{\dif(\frU)}(\dif, \dif)$.  One
characterization of $\dif$, which makes the isomorphism automatic, is
$p_!\cO$, where $p_!$ is the (derived) left adjoint of the pull-back
along the projection $p:\frX \to dR(\frX)$ \cite{bd}.  The
$\mathrm{Ext}$-description enhances to give an $A_\infty$-algebra
structure with the Yoneda products.

\emph{Twisted analytic versions} of $\dif_\frX$ are classified by
$H^1(\frX; \cO_{an}/\bC)$. On a manifold, $\cO_{an}/\bC\cong
(\cO_{an})^\times/\bC^\times$ is the sheaf of algebra automorphisms of
$\dif$ fixing $\cO$, and $1$-cocycles allow the patching together of
local copies of $\dif$ by means of isomorphisms fixing $\cO$. The same
group is the analytic hyper-cohomology $\bH^2\left(\frX;
  (\Omega_\frX^{\ge 1},d)\right)$ of the truncated de Rham
complex. When Hodge decomposition holds --- or else, in the algebraic
case, with log poles at infinity --- the latter becomes the Hodge
filtered part $F^{\ge 1}H^2(\frX;\bC)$.

The algebraic version of the same hyper-cohomology defines
\emph{twisted algebraic} versions of $\dif$. The latter are \emph{not}
locally isomorphic to $\dif$ in the Zariski topology, but to twisted
versions $\dif^\varphi$ satisfying $[X,Y] = \varphi(X,Y)$ for vector
fields $X,Y$ and (locally defined) closed $2$-forms $\varphi$. A class
in $\bH^2\left(\frX;(\Omega_\frX^{\ge 1},d)\right)$ is represented, in
a \v{C}ech covering $U_i$, by $1$-forms $\eta_{ij}$ on $U_{ij}$ and
closed $2$-forms $\varphi_i$ on $U_i$, satisfying the \v{C}ech cocycle
condition for the $\eta$ and the relations $d\eta_{ij} =
\varphi_i-\varphi_j$. The transformation $X\mapsto X+\eta_{ij}(X)$
extends to algebra isomorphisms patching together the
$\dif^{\varphi_i}$. Our situation is simplified by the isomorphism
$H^2(\frB;\bC) = H^1(\frB;\Omega^1)$, for semi-simple $G$; the twisted
versions of $\dif$ over $\frB$ are classified by invariant quadratic
forms on $\frg$, and are interpreted as differential operators on
(complex) powers of the theta-line bundle.

\subsection{Opers.} The final ingredient in the quantization of
Hitchin's construction is the space of \emph{opers} in $\frL$. There
are several isomorphic components, and just like for the Hitchin
section, pinning down one of them requires a choice of
$K_\Sigma^{1/2}$.  For $\LD=\mathrm{PSL}(2)$, the opers are the local
systems whose underlying $\mathrm{SL}(2)$-bundle is the unique (up to
isomorphism) non-trivial extension
\[
K_\Sigma^{1/2} \to E \to K_\Sigma^{-1/2}.
\]
For general $\LD$, they are the local systems whose underlying
principal bundle $P$ is defined from $E$ by the inclusion of the
principal $\mathrm{SL}(2)$ subgroup in $\LD$. Recall now that $\frL$
has an algebraic symplectic structure, defined by the pairing on forms
in any invariant quadratic form on ${}^L\frg$.

\smallskip

\emph{In a break with the traditional terminology and notation, in
  this paper we will denote by $\frO\subset \frL$ one of the
  components of the space of opers (specifying this component may
  require a choice of $K_\Sigma^{1/2}$).}

\smallskip

Beilinson and Drinfeld have proved \cite{bd} the following:

\begin{proposition}\label{operfilt}
  $\frO\subset\frL$ is a smooth Lagrangian subvariety contained in
  the smooth, Deligne--Mumford part of $\frL$. It carries a trivial
  gerbe with band $Z(\LD)$, and the associated space is an affine
  space over $\mathbf{H}$.
\end{proposition}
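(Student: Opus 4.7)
The plan is to combine the explicit local normal form for opers with a global gluing argument, in the style of Beilinson--Drinfeld. First I would recall that after choosing $K_\Sigma^{1/2}$ and the principal embedding $\mathfrak{sl}(2)\hookrightarrow {}^L\frg$, every oper can, on a disk, be gauged into the form $\nabla=d+f_+\,dz+q(z)\,dz$, where $f_+$ is the principal nilpotent and $q$ takes values in the Kostant slice to $f_+$, with weights of the natural $\bC^\times$-action given by the exponents plus one. A careful analysis of coordinate changes (which act on the slice by affine transformations whose linear part is determined by those weights) shows that such normal forms glue into a global affine space over $\bigoplus_i H^0(\Sigma,K_\Sigma^{m_i+1})=\mathbf{H}$. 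This simultaneously yields the affine structure claimed in the Proposition and establishes smoothness of $\frO$ of the correct dimension $(g-1)\dim{}^L G$.

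Next I would determine the automorphism group of an oper. A gauge transformation preserving both the Borel reduction and the transversality condition must commute with $f_+$; by Kostant's theorem the centralizer of the principal nilpotent in $\LD$ is unipotent and, when intersected with any Borel, reduces to the center $Z(\LD)$. Thus every oper has automorphism group exactly $Z(\LD)$, which both places $\frO$ inside the smooth Deligne--Mumford locus of $\frL$ and exhibits $\frO$ as a $Z(\LD)$-gerbe over a smooth base. The universal oper produced by the normal-form presentation is a global section of this gerbe, witnessing its triviality and identifying the coarse space with the affine $\mathbf{H}$-space above.

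For the Lagrangian property, the dimension count already matches $\tfrac{1}{2}\dim\frL$, since Riemann--Roch gives $\dim\mathbf{H}=\sum_i(2m_i+1)(g-1)=(g-1)\dim{}^L G$. It remains to verify that the algebraic symplectic form $\omega$ on $\frL$ restricts to zero on $\frO$. Conceptually, $\frO$ arises by deformation of the Hitchin section inside $T^\vee\on{Bun}_{\LD}$ under the one-parameter family interpolating $T^\vee\on{Bun}_{\LD}$ with $\frL$; the classical Hitchin section is Lagrangian, and Lagrangianness is preserved in such a flat family of symplectic stacks. As a direct alternative, I would represent tangent vectors at an oper $(P,\nabla)$ by hypercohomology classes of the Borel subcomplex $\left({}^L\frb_P\xrightarrow{\nabla}{}^L\frb_P\otimes K_\Sigma\right)\hookrightarrow\left({}^L\frg_P\xrightarrow{\nabla}{}^L\frg_P\otimes K_\Sigma\right)$ and observe that the invariant pairing on ${}^L\frg$ restricts to the degenerate pairing on ${}^L\frb$, so that after Serre duality two oper tangent vectors pair to zero.

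The main obstacle is the Lagrangian verification: the normal form delivers smoothness, the dimension count, and the gerbe statement almost for free, but the vanishing $\omega|_\frO=0$ is the substantive geometric input. I would pursue the Hitchin-section deformation route as the primary argument, since it exploits the already established Lagrangian property from \cite{hit} and the flat nature of the deformation recalled in Section 3; the Borel-subcomplex pairing serves as a concrete backstop should the deformation argument require additional care near the non-generic locus.
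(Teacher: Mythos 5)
The paper does not actually prove this statement: it is attributed outright to Beilinson--Drinfeld \cite{bd}, and the only comment offered is that the affine structure over $\mathbf{H}$ comes from the isomorphism $\Gamma(\Sigma,\mathfrak{ad}_P\otimes K_\Sigma)\cong\mathbf{H}$, which uses the Kostant slice. Your outline reconstructs the BD argument in the correct overall shape --- Kostant normal form giving the affine structure and smoothness, the centralizer computation giving the $Z(\LD)$-gerbe and the Deligne--Mumford property, then the Lagrangian check --- and your first two parts are essentially right (though the phrasing of Kostant's theorem is off: the centralizer of $f_+$ in $\LD$ is $Z(\LD)$ times a unipotent group, not itself unipotent, and what kills the unipotent part is commutation with the rest of the $q(z)$ data, not intersection with a Borel).

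The genuine gap is in your backstop Lagrangian argument. An oper connection has the form $\nabla=d+f_+\,dz+\cdots$ with $f_+\notin{}^L\frb$, so $[\nabla,\cdot]$ does \emph{not} map ${}^L\frb_P$ into ${}^L\frb_P\otimes K_\Sigma$; the two-term object ${}^L\frb_P\xrightarrow{\nabla}{}^L\frb_P\otimes K_\Sigma$ you write down is not a subcomplex of the de Rham complex, so there are no hypercohomology classes to pair. Even repairing this to the honest tangent complex ${}^L\frb_P\to{}^L\frg_P\otimes K_\Sigma$, the claim that the pairing vanishes ``because the Killing form is degenerate on ${}^L\frb$'' does not close: the Killing form is nonzero on the Cartan ${}^L\frh\subset{}^L\frb$. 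The clean argument is the one the paper's remark about $\Gamma(\Sigma,\mathfrak{ad}_P\otimes K_\Sigma)\cong\mathbf{H}$ is pointing at: the Kostant slice lets you represent every oper tangent vector by a global section of ${}^L\frg_P\otimes K_\Sigma$, i.e.\ a class sitting entirely in cohomological degree one of the complex ${}^L\frg_P\to{}^L\frg_P\otimes K_\Sigma$. The cup product of two degree-one classes in a two-term complex, paired into a complex concentrated in degree one, is automatically zero, so the image of $\mathbf{H}$ in $\mathbb{H}^1$ is isotropic, and the dimension count you already have upgrades this to Lagrangian. Your primary route via deformation of the Hitchin section is plausible in spirit but, as stated, relies on the unjustified assertion that Lagrangianness persists along the interpolating family; you would need to show the family of subvarieties stays isotropic, which is not automatic.
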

\noindent The last part follows from the computation 
\[
\Gamma(\Sigma,ad_P\otimes K_\Sigma) \cong \mathbf{H}, 
\]
which uses the Kostant slice for the adjoint action. When combined
with the grading on $\mathbf{H}$, the proposition supplies an
increasing filtration on the algebra of functions $\bC[\frO]$.

Recall that the components $\frB^\gamma$ of $\frB$ are labeled by
$\gamma \in \pi_1G$.

\begin{theorem}[\cite{bd}]\label{bdthm}
  There is a canonical isomorphism of algebras $\Gamma(\frB^\gamma,
  \dif^s) \equiv \bC[\frO]$ of global Spin differential operators on
  $\frB^\gamma$ with the polynomial functions on $\frO$.
\end{theorem}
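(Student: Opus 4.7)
The strategy I would pursue follows the Feigin--Frenkel/Beilinson--Drinfeld philosophy: compute $\Gamma(\frB^\gamma,\dif^s)$ via the loop-group uniformization $\frB\simeq G[\Sigma^o]\backslash G\pparz/G[[z]]$, where the global sections are extracted from a \emph{local} object (differential operators on the affine Grassmannian, or equivalently, the vacuum representation of $\hg$ at the critical level) by imposing invariance under $G[\Sigma^o]$. The half-density twist $\cK^{1/2}$ is precisely what arises from the critical central extension, which is why $\dif^s$ (rather than untwisted $\dif$) is the natural object on the stack side.

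The first step is to identify the algebra of global Spin differential operators with the $G[\Sigma^o]$-invariants in a local algebra. Concretely, $\dif^s$ on the double-quotient stack corresponds to the $G[[z]]$-equivariant pushforward to $\frB^\gamma$ of a twisted $\dif$-module on $G\pparz/G[[z]]$, and taking global sections amounts to taking $G[\Sigma^o]$-invariants of its fiber at the base point. For the structure sheaf $\dif^s$ itself, this fiber is the vacuum module $\mathbb{V}_{\mathrm{crit}}$ at the critical level, and the relevant algebra of endomorphisms is the center $\mathfrak{z}(\hg)$ of the completed enveloping algebra $\wh{U}(\hg)_{\mathrm{crit}}$. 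Thus
\[
\Gamma(\frB^\gamma,\dif^s)\cong \mathfrak{z}(\hg)^{G[\Sigma^o]},
\]
where the superscript denotes the invariants for the natural action via the uniformization. (The component label $\gamma$ is absorbed into the choice of base point, i.e.\ a $G$-bundle in that component; the resulting algebra is independent of this choice.)

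The second step is to invoke the Feigin--Frenkel isomorphism \cite{ff}, which identifies $\mathfrak{z}(\hg)$ canonically with the algebra of functions on the space of $\LD$-opers on the punctured formal disk $D^\times_\sigma$. Under this identification, the $G[\Sigma^o]$-invariance condition becomes the condition that an oper on $D^\times_\sigma$ extends to an $\LD$-oper on all of $\Sigma^o$, with its restriction to the disk compatible with the original germ at $\sigma$. Together with the regularity built into the vacuum module, the extension is forced to be a \emph{regular} oper on all of $\Sigma$, i.e.\ an element of $\frO$. Combining these identifications yields $\Gamma(\frB^\gamma,\dif^s)\cong\bC[\frO]$, with canonicity inherited from the canonicity of Feigin--Frenkel.

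The main obstacle, and the heart of the argument, is the passage from local to global opers — that is, showing that the $G[\Sigma^o]$-invariants on the oper side consist precisely of functions pulled back from the global moduli $\frO$, with no further invariants coming from opers with poles at $\sigma$ or from unexpected global symmetries. This requires both the computation of the center via Feigin--Frenkel (genuinely nontrivial input) and an argument, e.g.\ by filtering by pole order and using the flatness and affineness of the space of opers over each jet-level, that the global invariants reproduce exactly $\bC[\frO]$. Everything else — the uniformization, the invariant-theoretic description of $\Gamma$, and the matching of filtrations (compatible with Proposition~\ref{operfilt} and Theorem~\ref{hitchinthm}) — is comparatively formal.
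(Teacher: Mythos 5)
The theorem you are trying to prove is labeled \texttt{[\cite{bd}]} and the paper does not reprove it; it is cited verbatim from Beilinson--Drinfeld. So there is no in-paper proof to compare against, and I will evaluate your sketch on its own terms as an outline of the BD argument (as surveyed in \cite{fr}, Part III).

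You have identified the right ingredients --- the double quotient $\frB \simeq G[\Sigma^o]\backslash G\pparz/G[[z]]$, localization of the vacuum module $\mathbb{V}_{\on{crit}}$ to $\dif^s$, and the Feigin--Frenkel identification of the center $\mathfrak{z}(\hg)$ with $\bC[\on{Op}_{\LD}(D^\times)]$ --- but the pivot of your argument is stated incorrectly in a way that would not recover. You write
\[
\Gamma(\frB^\gamma,\dif^s)\cong \mathfrak{z}(\hg)^{G[\Sigma^o]},
\]
``invariants for the natural action via the uniformization.'' But $G[\Sigma^o]$ sits inside $G\pparz$, and $G\pparz$ acts on $\mathfrak{z}(\hg)$ by the adjoint action, under which the center is fixed by definition. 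Hence $\mathfrak{z}(\hg)^{G[\Sigma^o]} = \mathfrak{z}(\hg) = \bC[\on{Op}_{\LD}(D^\times)]$, which is enormously larger than $\bC[\frO]$: the invariance condition you propose imposes no constraint at all. The subsequent paragraph, reinterpreting $G[\Sigma^o]$-invariance as extendability of opers across $\Sigma^o$, is then built on this non-condition and does not follow.

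The actual mechanism in \cite{bd} runs in the opposite direction and is not an invariant-theoretic identification. The center $\mathfrak{z}(\hg)$ acts on $\mathbb{V}_{\on{crit}}$, hence on its localization $\dif^s$, hence on $\Gamma(\frB^\gamma,\dif^s)$, giving an algebra map $\mathfrak{z}(\hg)\to\Gamma(\frB^\gamma,\dif^s)$ (not an isomorphism a priori). One must prove two things, neither of which is ``comparatively formal'': (i) this map is \emph{surjective}, which is done by passing to associated graded under the order filtration on $\dif^s$ and the filtration on $\on{Op}$, and matching against Hitchin's identification $\Gamma(\frB;\mathrm{Sym}\,T)\cong\bC[\mathbf{H}]$ (cited here as Theorem~\ref{hitchinthm}); and (ii) the kernel is exactly the ideal of $\frO\hookrightarrow\on{Op}_{\LD}(D^\times)$, using the local-to-global behavior of opers and the structure of $\mathbb{V}_{\on{crit}}$ as the module attached to the unpunctured disk. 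It is the filtration comparison, not an invariant-theoretic computation on the center, that carries the weight; your sketch waves this away and replaces it with a step that is literally vacuous.
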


\begin{remark}
  Beilinson and Drinfeld prove more: for a point $\omega\in \frO$,
  with residue field $\bC_\omega$, the $\dif^s$-module
  $\dif^s_\omega:= \dif^s\otimes_{\Gamma(\dif^s)}\bC_\omega$ is
  regular holonomic, with singular support in Laumon's nilpotent cone,
  and is a Hecke eigensheaf whose eigenvalue is the local system
  $\omega$. Thus, $\dif^s_\omega$ is assigned to $\omega$ under the
  geometric Langlands correspondence.
\end{remark}

As in our discussion for Theorem~\ref{abelianbd}, this gives 
\begin{corollary}\label{deformationcor}
\begin{enumerate}
\item Any $\dif^s$-module admitting a finite free global presentation
  by copies of $\dif^s$ has a finite free global $\dif^s$-resolution.
\item The corresponding presentation in $\cC oh(\frO)$ defines a
  coherent sheaf, and this assignment gives an equivalence from the
  abelian category of $\dif^s$-modules on $\frB$ with free global
  presentations to that of coherent $\cO_\frO$-modules.
\item Thereunder, the functor $\Gamma(\frB; \underline{\:\:}) $
  corresponds to $\Gamma(\frO; \underline{\:\:}) $.
\end{enumerate}
\end{corollary}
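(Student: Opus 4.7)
The plan is to transcribe the abelian argument of Theorem~\ref{abelianbd}, using Theorem~\ref{bdthm} of Beilinson--Drinfeld in place of the abelian identification $\Gamma(A;\dif)=\bC[\frO]$. The two non-formal inputs I would invoke from \cite{bd} are (a) the flatness of $\dif^s$ as a module over $\Gamma(\frB^\gamma;\dif^s)=\bC[\frO]$, and (b) the fact that the adjoint pair $\dif^s\otimes_{\bC[\frO]}(-)\dashv\Gamma(\frB^\gamma;-)$ exhibits $\Gamma(\frB^\gamma;-)$ as a one-sided inverse on the image of the free-tensor functor. Both are established by passing to the associated graded under the PBW filtration on $\dif^s$ and invoking Hitchin's Theorem~\ref{hitchinthm}, which is why the proof of Theorem~\ref{bdthm} ultimately rests on the Feigin--Frenkel description of the center of $\hg$ at critical level \cite{ff}.

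For (i), suppose $\cM$ has a finite free presentation $(\dif^s)^{\oplus p}\to(\dif^s)^{\oplus q}\to\cM\to0$. Applying $\Gamma(\frB^\gamma;-)$ and using input (b) yields a presentation of $M:=\Gamma(\frB^\gamma;\cM)$ as a finitely presented $\bC[\frO]$-module. Since $\frO$ is smooth and its ring of functions is a polynomial ring over $\bC[\mathbf{H}]$ by Proposition~\ref{operfilt}, $\bC[\frO]$ has finite global dimension, so $M$ admits a finite free resolution. Applying the exact functor $\dif^s\otimes_{\bC[\frO]}(-)$, where exactness uses flatness (a), gives the desired finite free $\dif^s$-resolution of $\cM$: one recovers $\cM$ at the last step because the two functors are mutual inverses on finite free modules.

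For (ii), the candidate functor sends $\cM$ to the coherent sheaf on $\frO$ associated to $M=\Gamma(\frB^\gamma;\cM)$. Input (b) turns $\dif^s\otimes_{\bC[\frO]}(-)$ into a quasi-inverse: it sends a finite presentation of $M$ back to a finite free presentation of $\cM$ (so essential surjectivity is immediate), and sends morphisms of coherent $\cO_\frO$-modules to the unique $\dif^s$-module morphisms inducing them on global sections (so fullness and faithfulness both follow). Statement (iii) is then tautological, since both $\Gamma(\frB^\gamma;-)$ and $\Gamma(\frO;-)$ reduce to the forgetful functor from finitely generated $\bC[\frO]$-modules to $\bC$-vector spaces, and the equivalence identifies them.

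The main obstacle is the foundational flatness assertion (a) and the attendant vanishing hidden in (b); these are not at all formal on the non-$\dif$-affine stack $\frB$, and in particular would fail if one used a twisted form of $\dif$ other than the Spin version $\dif^s$. Once these inputs are granted from \cite{bd} (via \cite{ff}), the corollary is a verbatim transcription of the abelian template, with the cosmetic bookkeeping that the components of $\frB$ labeled by $\gamma\in\pi_1G$ are matched with components of the space of opers (up to the $Z(\LD)$-gerbe) but otherwise do not enter the structural argument.
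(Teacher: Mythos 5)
Your structural reading is correct: the corollary is meant to transcribe the abelian argument for Theorem~\ref{abelianbd}, with Theorem~\ref{bdthm} playing the role of \eqref{abelianend}, and you have correctly isolated the two non-formal inputs (a) flatness and (b) the one-sided inverse property $\Gamma\circ F = \mathrm{Id}$, where $F=\dif^s\otimes_{\bC[\frO]}(-)$. Granted (a), (b), and Theorem~\ref{bdthm}, your derivation of (i)--(iii) is sound, and it matches the paper's own (extremely terse) reference back to the abelian discussion.

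The gap is in the claim that input (b) is ``established by passing to the associated graded under the PBW filtration and invoking Hitchin's Theorem~\ref{hitchinthm}.'' That collection of facts suffices for (a) and for Theorem~\ref{bdthm} (i.e.\ for $H^0$), but it does not suffice for (b). The problem is that $\Gamma(\frB^\gamma;-)=\mathrm{Hom}_{\dif^s}(\dif^s,-)$ is only left exact, and $\dif^s$ has nonvanishing higher cohomology on $\frB$ (equal to $\Omega^{>0}[\frO]$ by Theorem~\ref{quantum}), so $\dif^s$ is far from projective as a $\dif^s$-module. Applying $\Gamma$ to a short exact sequence $0\to F(N)\to F(P_0)\to F(M)\to 0$ therefore produces a boundary term in $H^1(\frB;F(N))$, and to conclude $\Gamma F(M)=M$ one must show this term injects into $H^1(\frB;F(P_0))$. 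The natural induction on projective dimension closes only once one knows the full graded algebra $H^\bullet(\frB;\dif^s)$ is flat (indeed free) over $\bC[\frO]$ --- which is exactly the content of Theorem~\ref{quantum} together with the collapse of the PBW spectral sequence proved in Proposition~\ref{partialresult}, both living in \S\ref{coh}. Hitchin's Theorem~\ref{hitchinthm} only controls the classical $E_1$-page in cohomological degrees $0$ and $1$ and says nothing about the degeneration of the quantum spectral sequence. (In the $\GL(1)$ warm-up of \S2 this issue is invisible because the full Fourier-Mukai equivalence $\Phi^\flat$, hence the full $\mathbf{R}\Gamma$, was taken as given.) Your fullness argument in (ii) has the same hidden dependency: by adjunction $\mathrm{Hom}_{\dif^s}(F(M),F(M'))=\mathrm{Hom}_{\bC[\frO]}(M,\Gamma F(M'))$, so reducing this to $\mathrm{Hom}_{\bC[\frO]}(M,M')$ again requires $\Gamma F(M')=M'$. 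Thus (b) is a genuinely stronger input than Theorem~\ref{bdthm} plus Hitchin; it must either be cited wholesale from \cite{bd} or deduced from the higher cohomology computation in \S\ref{coh}, not from the ingredients you list.
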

\noindent
Under this equivalence, different components of $\frB$ correspond to
the characters of the $Z(\LD)$-gerbe over $\frO$ under
Corollary~\ref{deformationcor}.

\section{Computation of higher cohomologies }
\label{coh}
We now generalize the Beilinson--Drinfeld theorem~\ref{bdthm} by
describing the higher cohomology.

\subsection{The classical case.} 
We first extend Hitchin's calculation. Loosely phrased, our result
asserts that singularities in the fibers of the Hitchin map $\chi$ are
invisible to the cohomology of $\cO$. For simplicity, we assume here
that $G$ is simply-connected (so that $\LD$ is of adjoint type). The
result extends by matching components of $T^\vee\frB$ and characters
of $Z(\LD)$.

\begin{theorem}\label{classical}
  We have an isomorphism $H^\bullet(T^\vee\frB;\cO) \cong
  \Omega^\bullet[\mathbf{H}]$ with the algebraic differentials on the
  Hitchin base $\mathbf{H}$. The grading induced by the
  $\bC^\times$-action on $T^\vee$ comes from the natural grading on
  $\mathbf{H}$ on the even copy, shifted down by $1$ for the odd
  copy. This isomorphism depends on $k$ in the following way:
  rescalings of the restriction of $k$ to a simple factor of
  ${\mathfrak g}$ corresponds to rescalings of the appropriate summand
  in $\Omega^1[\mathbf{H}]$.
\end{theorem}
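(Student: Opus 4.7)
The plan is to push the calculation down to the Hitchin base $\mathbf{H}$ via $\chi:T^\vee\frB\to\mathbf{H}$. Since $\mathbf{H}$ is an affine space, the Leray spectral sequence for $\chi$ collapses and yields
\[
H^k(T^\vee\frB;\cO) = \Gamma\!\left(\mathbf{H},\, R^k\chi_*\cO\right).
\]
The target identification amounts to showing that $R^\bullet\chi_*\cO$ is the exterior algebra generated over $\cO_\mathbf{H}=R^0\chi_*\cO$ by $R^1\chi_*\cO\cong \mathbf{H}^\vee\otimes_\bC\cO_\mathbf{H}$; taking global sections then recovers $\Omega^\bullet[\mathbf{H}]$.

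Theorem~\ref{hitchinthm} supplies the degree $0$ and $1$ pieces directly. For $k=0$, fibers of $\chi$ over each component $\frB^\gamma$ are connected, giving $R^0\chi_*\cO=\cO_\mathbf{H}$. For $k=1$, the Remark after Theorem~\ref{hitchinthm} canonically trivializes $R^1\chi_*\cO$ as $\mathbf{H}^\vee\otimes\cO_\mathbf{H}$ by contracting Hamiltonian vector fields of linear coordinates on $\mathbf{H}$ against $c_1(\Theta_k)\in H^1(T^\vee\frB;\Omega^1)$. Cup product then provides a canonical map of graded $\cO_\mathbf{H}$-algebras
\[
\bigwedge\nolimits^{\bullet} R^1\chi_*\cO \longrightarrow R^\bullet\chi_*\cO,
\]
and the core task is to show that this map is an isomorphism.

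The main obstacle is extending this isomorphism across the discriminant locus of $\chi$, which is codimension one in $\mathbf{H}$. Over the regular locus $\mathbf{H}^\circ\subset\mathbf{H}$ the fibers are torsors under abelian varieties, and the Lagrangian property identifies the relative tangent sheaf with $\chi^*\mathbf{H}^\vee$ via the symplectic form on $T^\vee\frB$; Hodge theory for abelian varieties together with Grauert's base change theorem then delivers the isomorphism over $\mathbf{H}^\circ$. To handle the singular fibers, the plan is to combine two pieces of structure: Serre duality for the Lagrangian fibration $\chi$, whose relative dualizing complex is trivialized by the symplectic form and which consequently matches $R^k\chi_*\cO$ with the dual of $R^{\dim\mathbf{H}-k}\chi_*\cO$, bootstrapping from the known $R^0,R^1$ to the top and co-top pieces; and $\bC^\times$-equivariance under the Hitchin scaling action, which cuts down by weight the possible extra contributions from singular fibers. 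Combined, these force the cup product map to be an isomorphism.

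Once the isomorphism is established, the grading claim is a weight bookkeeping. The symplectic form on $T^\vee\frB$ has weight $1$ under the fiberwise $\bC^\times$-action; hence the Hamiltonian vector field of a linear Hamiltonian $\chi^*v$ with $v\in\mathbf{H}^{(m+1)}$ has weight $m$. Contracting against the weight-$0$ class $c_1(\Theta_k)$ produces a generator of $R^1\chi_*\cO$ in weight $m$ rather than $m+1$, which is exactly the asserted downward shift by one on the odd copy. The dependence on $k$ is then just the linear dependence of $c_1(\Theta_k)$ on $k$ restricted to each simple factor of $\frg$.
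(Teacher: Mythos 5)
Your proposal takes a genuinely different route from the paper: you work directly with the Hitchin map $\chi$ and try to show $R^\bullet\chi_*\cO \cong \bigwedge^\bullet R^1\chi_*\cO$, whereas the paper avoids the geometry of $\chi$ entirely, instead computing $H^\bullet(\frB;\mathrm{Sym}\,T)$ via the double-quotient presentation $G[[z]]\backslash G((z))/G[\Sigma^o]$, a $2$-step tangent complex, and a spectral sequence whose $E_1$-term is identified using Theorems~A and~D of [FHT]. Unfortunately, your key step has a genuine gap.

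The central problem is that $\chi\colon T^\vee\frB\to\mathbf{H}$ is \emph{not} proper on the stack. The fiber $\chi^{-1}(0)$ contains the zero section, which is all of $\frB$, together with the global nilpotent cone; in fact $T^\vee\frB$ is not even of finite type. Relative Serre duality for $\chi$, including the trivialization of the relative dualizing complex by the symplectic form, is a statement about proper morphisms, so the duality you invoke to produce $R^{n-1}\chi_*\cO$ and $R^n\chi_*\cO$ from $R^0$ and $R^1$ is not available. (This is precisely why the paper needs Lemma~\ref{finite}, showing that the relevant cohomologies are supported on a finite union of Atiyah--Bott strata --- a nontrivial technical input that your argument would require as well but does not supply.) Beyond that, even if some form of duality could be salvaged, it would give you only the top and co-top pieces; the middle degrees are not controlled. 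The assertion that $\bC^\times$-equivariance ``cuts down by weight the possible extra contributions from singular fibers'' is a hope rather than an argument --- the scaling action does not \emph{a priori} exclude weight-compatible contributions to $R^q\chi_*\cO$ concentrated over the discriminant, and nothing you have said rules them out. Your weight bookkeeping for the grading and the $k$-dependence is correct, but it rests on an isomorphism that has not been established.

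Over the regular locus your argument is sound in outline (Grauert plus Hodge theory of the abelian fibers, once one accounts for the $Z(G)$-gerbe), and it is instructive to compare: the paper's loop-group computation never sees the fibers of $\chi$ at all, which is exactly why it circumvents the discriminant issue that blocks your route.
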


\begin{proof}
  The calculation parallels the arguments in \cite{fht}, especially
  \S8 and \S9.  We refer the reader to the latter for more background
  and the details of the methods (as they do involve
  infinite-dimensional vector bundles over a stack of infinite type).

  Choosing a point $\sigma\in \Sigma$ with a local coordinate $z$ and
  letting $\Sigma^o:=\Sigma\setminus\{\sigma\}$, we can present the
  stack $\frB$ as the double quotient $G[[z]]\backslash
  G((z))/G[\Sigma^o]$. We rewrite this as $G[[z]]\backslash
  \mathbf{X}$, with the \emph{thick flag variety} $\mathbf{X}:=
  G((z))/G[[\Sigma^o]]$.  However, we choose to present the tangent
  complex by a different, $2$-step resolution
\[
\frg[\Sigma^o] \xrightarrow{\partial=\mathrm{Ad}_\phi}
\frg((z))/\frg[[z]]
\]
with the differential equal to the adjoint twist of the obvious
inclusion at the point $\phi\in G((z))$.  The second term is a
trivial bundle over $\mathbf{X}$, whereas the first is associated to
the adjoint action of $G[\Sigma^o]$.

We seek the $G[[z]]$-equivariant hyper-cohomology of the differential
graded algebra
\begin{equation}\label{hypercoh}
  H^q\left(\frB;\mathrm{Sym}^r\, T\right) = \bigoplus_{s+t=r}
  \bH^q_{G[[z]]}\left(\mathbf{X};
    {\textstyle\bigwedge}^s\frg[\Sigma^o] \otimes \mathrm{Sym}^t
    (\frg((z))/\frg[[z]]) \right)
\end{equation}
with the generating bundle $\frg[\Sigma^o]$ placed in cohomological
degree $-1$ and differential induced from $\partial$. The
$\mathrm{Sym}$-factor carries the adjoint action of $G[[z]]$.

Filtering by $s$-degree gives a spectral sequence $E_1^{-s,q} \implies
\bH^{q-s}$ which we now compute.  Let us first recall why, as in \S9.4
of \cite{fht}, we have a \emph{key factorization} of the $E_1$ term,
\begin{equation}\label{e1split}
E_1^{-s,q} = \bigoplus_u H^{q-u}_{G[[z]]}\left(\mathrm{Sym}^{r-s}
  (\frg((z))/\frg[[z]])\right)\otimes
			H^{u}\left(\mathbf{X};
                          {\textstyle\bigwedge}^s\frg[\Sigma^o]\right).
\end{equation} 
The product above would \emph{a priori} be the $E_1$ term in the Leray
spectral sequence for the fibration $\frB\twoheadrightarrow
BG[[z]]$. However, we claim that the second factor
$H^\bullet(\mathbf{X}; \bigwedge^\bullet \frg[\Sigma^o])$ is a free
skew-commutative algebra, trivial as a $G((z))$-representation. If
so, then the Leray $E_1$ term is freely generated over the base by
classes which extend to the total space, and so \eqref{e1split} is the
true $E_1$ term for \eqref{hypercoh} and not just the first page of
its Leray sequence.

Our claim about $H^\bullet(\mathbf{X}; \bigwedge^\bullet
\frg[\Sigma^o])$ is a variant of \cite{fht}, Theorem~D, where we have
replaced the differentials $\Omega^1(\Sigma^o;\frg)$ in the exterior
algebra generators by the functions $\frg[\Sigma^o]$. Our functorial
construction of the generating cocycles in \cite{fht} \S9 allows in
fact for a twist by any line bundle $\mathscr{L}\to\Sigma^o$ in those
coefficients, with a corresponding twist appearing in the cohomology 
generators. More precisely, the general statement identifies
$H^{u}\left(\mathbf{X}; \bigwedge^s
  \Omega^1(\Sigma^o;\frg\otimes\mathscr{L}\right)$ with the free
bi-graded skew-commutative algebra generated by copies of
\begin{equation}\label{oddgen}
	\Gamma\left(\Sigma^o; \mathscr{L}^{\otimes
            m}\right)\quad\text{and}\quad 
		\Gamma\left(\Sigma^o; \mathscr{L}^{\otimes
                    (m+1)}\otimes K_\Sigma\right) 
\end{equation}
in degrees $(s,u) = (m,m)$ and $(m+1,m)$, as $m\in\exp(\frg)$. The
parity is given by $u-s$. Theorem~D of \cite{fht} had
$\mathscr{L}=\cO$.

In the present case, $\mathscr{L}=T\Sigma^o$, the spaces in each pair
\eqref{oddgen} are both isomorphic to \linebreak $\Gamma\left(\Sigma^o;
  (T\Sigma^o)^{\otimes m}\right)$, and with respect to
\eqref{e1split}, $s=r,u=q$, giving $(s,q,r)=(m,m,m), (m+1,m, m+1)$.

Similarly, Theorem~A of \cite{fht} (with the same twist in the
generators) describes the cohomology $H^q_{G[[z]]}\left(\mathrm{Sym}^t
  (\frg((z))/\frg[[z]])\right)$ as the skew-commutative algebra freely
generated by copies of
\begin{equation}\label{evengen}
  \left\{\bC((z))/\bC[[z]]\right\} \otimes (\partial/\partial
  z)^{\otimes m}
\end{equation}
placed in bi-degrees $(t,q)=(m,1)$ and $(m+1,0)$, as $m$ ranges over
$\exp(\frg)$. Parity is given by $q$. With $s=u=0$ in \eqref{e1split},
these correspond to $(s,q,r) = (0,1,m), (0,0,m+1)$.

The generators in \eqref{oddgen} and \eqref{evengen} come in pairs of
\emph{matching $r$-degrees} $m, m+1$, and with spectral sequence
bi-degrees $(-s,q)$ equal to $(-m,m)$ and $(0,1)$ for the former, and
$(-m-1,m)$ and $(0,0)$ for the latter. There is the possibility of
some obvious leading differentials in the spectral sequence for
\eqref{hypercoh} given by the restriction maps
\begin{equation}\label{resolve}
\Gamma\left(\Sigma^o; (T\Sigma^o)^{\otimes m}\right) \to 
		\left\{\bC((z))/\bC[[z]]\right\} \otimes
                (\partial/\partial z)^{\otimes m}
\end{equation}
from \eqref{oddgen} to \eqref{evengen}, on pages $m$ and $m+1$
respectively. That these are indeed the leading differentials can be
seen in the description of the generating cocycles via contraction
with the Atiyah class, as in \S8 and \S9 of \cite{fht}; the same
calculation applies here \emph{verbatim}.

In genus $\ge2$ and for semi-simple $\frg$ (when all $m>0$), the map
in \eqref{resolve} is injective with cokernel $H^1\left(\Sigma;
  (T\Sigma)^{\otimes m}\right)$. After resolving the leading
differentials, we thus get for $H^q\left(\frB; \mathrm{Sym}\,T\right)$
the free skew-commutative algebra on copies of
\[
H^1\left(\Sigma; (T\Sigma)^{\otimes m}\right),
\]
in cohomology degrees $q= 0,1$ and symmetric degrees $m+1$ and $m$,
respectively. Now, we \emph{know} from Hitchin's construction that
these generators survive to $E_\infty$ in the sequence: therefore, no
further differentials can occur and the theorem is proved. \end{proof}

\subsection{Quantization.}
We now deform the graded $\cO_{\frB}$-algebra $\mathrm{Sym}\, T\frB$
to the $\cO_{\frB}$-module $\dif^s$. The latter is filtered by the
order of the operator, and the associated graded sheaf is
$\mathrm{Sym}\, T\frB$. The canonical isomorphism
$H^\bullet(\frB;\dif^s)=\mathrm{Ext}_{\dif^s(\frB)}^\bullet\left(\dif^s;
\dif^s\right)$
makes the former into an algebra deforming $H^\bullet(\frB;
\mathrm{Sym}\, T)$.

The algebra of differential forms on opers, $\Omega^\bullet[\frO]$, is
also filtered, using the $\mathbf{H}$-affine structure on $\frO$ (see
Proposition~\ref{operfilt} and below). Recall that we have fixed a
non-degenerate invariant quadratic form $k$ on $\frg$. We then have an
isomorphism
$$\Omega^\bullet[\frO]\cong\mathrm{Ext}_\frL^\bullet\left(\cO_\frO;
  \cO_\frO\right)$$ which is obtained using the ($k$-dependent)
symplectic structure on $\frL$, which identifies the normal and
cotangent bundles to $\frO$.  The following supplies the higher
cohomologies in Theorem~\ref{bdthm}.

\begin{theorem}\label{quantum}
  There is a canonical isomorphism of filtered algebras
\[
H^\bullet(\frB;\dif^s) =
\mathrm{Ext}_\frL^\bullet\left(\cO_\frO;\cO_\frO\right).
\]
Using the ($k$-dependent) isomorphism with $\Omega^\bullet[\frO]$, the
associated graded map becomes the isomorphism of
Theorem~\ref{classical} (multiplied by $(-1)^{\on{deg}}$).
\end{theorem}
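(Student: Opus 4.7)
The strategy is to realize both sides as filtered algebras whose associated graded coincides with $\Omega^\bullet[\mathbf{H}]$ (Theorem~\ref{classical}), and to construct a canonical filtered comparison map extending the Beilinson--Drinfeld $H^0$-isomorphism of Theorem~\ref{bdthm}. Since a filtered map that induces an isomorphism on associated graded is itself an isomorphism, it suffices to produce the comparison compatibly with the filtrations and to recover Theorem~\ref{classical} on the associated graded.

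\smallskip

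For the filtrations: on the LHS, the order filtration on $\dif^s$, with associated graded $\mathrm{Sym}\,T\frB = \cO_{T^\vee\frB}$, induces a filtration on the Yoneda algebra $H^\bullet(\frB;\dif^s) = \on{Ext}^\bullet_{\dif^s}(\dif^s,\dif^s)$ whose associated graded is a subquotient of $H^\bullet(T^\vee\frB;\cO) \cong \Omega^\bullet[\mathbf{H}]$. On the RHS, the deformation of $\frL$ toward $T^\vee\on{Bun}_{\LD}$ introduced earlier (via the class $[k^{-1}]$) degenerates $\frO$ to the Hitchin section; this induces a filtration on $\on{Ext}^\bullet_\frL(\cO_\frO,\cO_\frO)$ whose associated graded is the Ext algebra of the structure sheaf of the smooth Lagrangian Hitchin section inside $T^\vee\on{Bun}_{\LD}$. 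A Koszul/HKR computation identifies this last with $\bigwedge^\bullet N$ of the normal bundle, and the symplectic form from $k^{-1}$ trivializes $N$ as the cotangent bundle of the slice, producing $\Omega^\bullet[\mathbf{H}]$ after matching the two Hitchin bases via $k$.

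\smallskip

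The comparison is then built by matching generators. As recorded in the introduction, both Ext algebras are Koszul duals of formal deformations --- of $\dif^s$ inside $\dif^s_\frB\mbox{-mod}$, respectively of $\frO$ inside $\frL$ --- and both are $A_\infty$-equivalent to strictly graded-commutative algebras. In degree $0$ the match is Theorem~\ref{bdthm}. In degree $1$, the generators on both sides should correspond to copies of $H^1(\Sigma;K_\Sigma^{\otimes(m+1)})$ for $m\in\exp(\frg)$: on the LHS via the Atiyah-class construction of Theorem~\ref{classical}(iii) (contraction with $c_1(\Theta_k)$), and on the RHS via the Kostant description of the normal bundle to $\frO$ and the symplectic pairing, both determined canonically by $k$. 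The match then extends multiplicatively, and on associated graded recovers the Theorem~\ref{classical} isomorphism up to the sign $(-1)^{\on{deg}}$ arising from the two compensating symplectic/Koszul identifications used on each side in odd degree.

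\smallskip

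The main obstacle is making the degree-one matching truly canonical, rather than a mere abstract existence from Koszul duality. This requires showing that the Atiyah-class construction of the odd generators on the $\frB$-side (used in the proof of Theorem~\ref{classical}) agrees, via $k$, with the normal-bundle Atiyah class of $\frO\subset\frL$, which entails tracking the Feigin--Frenkel--Beilinson--Drinfeld chiral/BRST construction at the $H^1$ level, beyond the global-sections level where their match is already established.
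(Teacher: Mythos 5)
Your proposal sidesteps the hard step of this theorem: you never prove that the algebra $H^\bullet(\frB;\dif^s)$ is graded-commutative. Proposition~\ref{partialresult} in the paper does what your ``filtrations'' paragraph sketches on the $\frB$-side --- it constructs the degree-one lifts from $c_1(\Theta_k)$ via the commutator construction and establishes the collapse of the order spectral sequence, so that the associated graded of $H^\bullet(\frB;\dif^s)$ is the free skew-commutative algebra $\Omega^\bullet[\mathbf{H}]$. But a filtered algebra whose associated graded is free (skew-)commutative need not itself be commutative: the Weyl algebra is the standard counterexample. Your assertion that ``the match then extends multiplicatively'' silently assumes that the relations in $H^\bullet(\frB;\dif^s)$ are the same as those in $\mathrm{Ext}^\bullet_\frL(\cO_\frO;\cO_\frO)$; in particular, that $H^\bullet(\frB;\dif^s)$ is commutative. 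And your appeal to the introduction (``both Ext algebras are $A_\infty$-equivalent to strictly commutative algebras'') is circular, because that statement is a consequence of Theorem~\ref{quantum} and of Theorem~\ref{ainfty}, not an available premise.

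The paper devotes all of Section~5 to precisely this commutativity. The argument has nothing to do with matching generators via the Atiyah class: it goes through the local statement (Proposition~\ref{localainfty}, built on \cite{ft}), which establishes the $A_\infty$-triviality of $H^\bullet(\frg[[z]],\frg;\mathbb{V}_{\mathrm{crit}})$, and then either (first proof) realizes $H^\bullet(\frB;\dif^s)$ as a quotient of that strictly commutative local algebra via the BRST/localization presentation \eqref{quantumhypercoh}, or (second proof) Koszul-dualizes and shows, by exhibiting unobstructed formal deformations of $\dif^s$ in every exact-form direction, that the dual Poisson structure must vanish. Your final paragraph hints in the right direction when you mention ``tracking the Feigin--Frenkel--Beilinson--Drinfeld chiral/BRST construction at the $H^1$ level,'' but you frame this as a canonicity issue rather than recognizing it as the essential vehicle for commutativity. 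In short: what you present as a bookkeeping step (``extends multiplicatively'') is actually the content of the theorem, and your plan gives no argument for it.
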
 
\begin{remark}
  The use of the form $k^{-1}$ on ${}^L\frg$, concealed in the
  symplectic form on $\frL$, is the source of the $k$-dependence in
  Theorem~\ref{classical}.
\end{remark}

We prove the theorem in two steps. First, we partially refine our
calculation in Theorem~\ref{classical}:
\begin{proposition}\label{partialresult}
  The odd generators $\mathbf{H}^\vee$ of $H^1(\frB;\mathrm{Sym}\,T)$
  over $H^0$ have a preferred lift to $H^1(\frB;\dif^s)$. Furthermore,
  the associated graded algebra of $H^\bullet(\frB;\dif^s)$ (with
  respect to the filtration inherited from the order filtration on
  $\dif^s$) is naturally isomorphic to
  $H^\bullet(\frB;\mathrm{Sym}\,T)$.
\end{proposition}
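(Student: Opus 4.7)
The natural tool is the spectral sequence coming from the order filtration $\dif^{s,\le k}$ on $\dif^s$:
$$E_1^{p,q} = H^q(\frB;\mathrm{Sym}^p T\frB) \Longrightarrow H^{p+q}(\frB;\dif^s).$$
The associated-graded statement amounts to degeneration of this sequence at $E_1$, and the preferred-lift statement specifies canonical cocycle representatives for the resulting $E_\infty$-generators. Multiplicativity (each $d_r$ is a derivation) combined with Theorem~\ref{classical} --- which describes $E_1 = \Omega^\bullet[\mathbf{H}]$ as the free skew-commutative algebra over $\bC[\mathbf{H}]\subset E_1^{*,0}$ on the odd classes $\mathbf{H}^\vee\subset E_1^{*,1}$ --- reduces degeneration to checking that $d_r$ vanishes on the polynomial generators in $\bC[\mathbf{H}]$ and on the odd generators in $\mathbf{H}^\vee$.

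The polynomial generators are handled by Theorem~\ref{bdthm}: the Beilinson--Drinfeld isomorphism $\Gamma(\frB;\dif^s)\equiv\bC[\frO]$ is filtered with associated graded $\bC[\mathbf{H}]$, so each polynomial generator already lifts to $H^0(\frB;\dif^s)$ and its higher differentials vanish tautologically. For the odd generators I would quantize the Atiyah-class recipe of Theorem~\ref{hitchinthm}(iii): classically each odd class is $\iota_{X_H}(c_1(\Theta_k))$ for a linear Hamiltonian $H\in\mathbf{H}^\vee$, and its quantum counterpart combines the short exact sequence $0\to\cO\to\dif^{s,\le 1}\to T\frB\to 0$ (whose extension class is the Atiyah class of $\cK^{1/2}$, proportional to $c_1(\Theta_k)$) with a Beilinson--Drinfeld lift $\tilde H\in\bC[\frO]$ of $H$; the contraction of the inner derivation $[\tilde H,-]$ against this Atiyah sequence produces a \v{C}ech class in $H^1(\frB;\dif^s)$ whose additive ambiguity (central scalars in $\tilde H$) vanishes and whose symbol recovers the classical generator by construction.

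The main obstacle I anticipate is verifying that this cocycle is genuinely closed in $\dif^s$ (not merely on the associated graded) and that the lift lands in $H^1(\frB;\dif^s)$ rather than some $H^1(\frB;\dif^{s,\le k})$ with $k$ growing with the degree of $H$. I would address both points by rerunning the \v{C}ech calculation behind Theorem~\ref{classical} directly with $\dif^s$-coefficients, using the presentation $\frB\simeq G[[z]]\backslash G\pparz/G[\Sigma^o]$: the factorization \eqref{e1split} admits a quantum analogue in which $\mathrm{Sym}(\frg\pparz/\frg[[z]])$ is replaced by the critical-level completed enveloping algebra of $\hg$ modulo $\frg[[z]]$, whose $G[[z]]$-equivariant cohomology is the Feigin--Frenkel centre $\bC[\frO]$. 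The cohomology of the thick flag variety $\mathbf{X}$ with the generators \eqref{oddgen} is insensitive to quantization (the generators are $G\pparz$-trivial), so its $H^1$-classes produce the canonical $H^1(\frB;\dif^s)$-lifts; the leading differential is the same restriction \eqref{resolve} to the formal disc as in the classical case, and resolving it both supplies the preferred lifts and forces all subsequent $d_r$ to vanish, thereby yielding degeneration.
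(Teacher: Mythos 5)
Your overall framework matches the paper's: the order-filtration spectral sequence, multiplicativity, reduction to checking survival of the generators of $E_1 = \Omega^\bullet[\mathbf{H}]$, and the use of Theorem~\ref{bdthm} to handle the $H^0$-generators are all exactly as in the paper. You are also right that the crux is lifting $\mathbf{H}^\vee\subset E_1^{*,1}$ by quantizing Hitchin's recipe $\iota_{X_H}(c_1(\Theta_k))$.

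Where your argument departs — and is not yet a proof — is the actual construction of the lift. You propose to ``contract the inner derivation $[\tilde H,-]$ against the Atiyah sequence $0\to\cO\to\dif^{s,\le 1}\to T\frB\to 0$,'' but this operation is not well-defined as stated: $[\tilde H,-]$ is a derivation of $\dif^s$, not a map you can pair against an extension class in $\mathrm{Ext}^1(T\frB,\cO)$, and the \v{C}ech computation you gesture at does not obviously close up. The paper's mechanism is different and cleaner: the obstruction to finding an anti-derivative $\kappa$ of $c_1(\Theta_k)$ in $H^1(\frB;\cO)$ is circumvented by using $H^1(\frB;\cO_{an}/\bC)$, and to stay algebraic one forms the $1$-parameter twisted family $\dif^t$ determined by $t\cdot c_1(\Theta_k)\in\bH^1(\Omega^{\ge 1})$ and uses the connecting-plus-projection map $\pi\circ\delta:H^0(\dif)\to H^1(\dif)$ from the long exact sequence of $\dif^t\xrightarrow{t}\dif^t\to\dif$; this is precisely the ``commutator with the anti-derivative of $c_1$'' made rigorous, and it is manifestly well-defined and independent of choices. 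Your fallback — rerunning the \v{C}ech/hypercohomology computation of Theorem~\ref{classical} with $\mathbb{V}_{\on{crit}}$-coefficients, i.e.\ the complex \eqref{quantumhypercoh} — is a legitimate and closely related route, but that is actually the machinery the paper reserves for the stronger $A_\infty$-statement (Theorem~\ref{ainfty}), and it requires additional nontrivial justification (exactness of the localization functor on a suitable exact subcategory of Harish-Chandra modules) which you don't supply; the ``quantum factorization'' you invoke is not an automatic consequence of \eqref{e1split}. Finally, you omit a necessary technical ingredient: $\frB$ is of infinite type, so one must first know that $H^\bullet(\frB;\mathrm{Sym}\,T)$ and $H^\bullet(\frB;\dif^s)$ are computed on a finite union of Atiyah--Bott strata (the paper's Lemma~\ref{finite}) before the filtration spectral sequence can be run and compared.
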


\noindent
Then, in the next section, we ascertain the \emph{commutativity} of
$H^\bullet(\frB;\mathrm{Sym}\,T)$. A quantum version of the
construction in Theorem~\ref{classical} will present
$H^\bullet(\frB;\dif^s)$ as quotient of a \emph{commutative} algebra;
in fact, Theorem~\ref{ainfty} will refine Theorem~\ref{quantum} to an
isomorphism of $A_\infty$ algebras. This will identify the formal
neighborhood of $\dif^s$ within $\dif^s$-modules on $\frB$ with that
of $\cO_\frO$ inside $\cC oh(\frL)$, canonically to the first order.

\begin{proof}[Proof of Proposition \ref{partialresult}.]
  In Lemma~\ref{finite} below, we will show that the cohomologies of
  $\mathrm{Sym}\,T$ and $\dif^s$ on the (infinite type) stack $\frB$
  come from a \emph{finite type} substack.  Allowing this for now, the
  increasing order filtration on $\dif^s$, with $\mathrm{Gr}(\dif^s)
  = \mathrm{Sym}\,T$, leads to a spectral sequence with the first term
\[
E_1^{p,q} = H^{p+q}(\frB; \mathrm{Sym}^{-p}\,T) \implies
H^{p+q}(\frB;\dif^s).
\] 
We will show its collapse by verifying the survival of all $E_1$
generators. Theorem~\ref{bdthm} addresses $H^0(\frB;\dif^s) =
\bC[\frO]$. We will now construct a space of $H^1$ classes isomorphic
to $\mathbf{H}^\vee$, lifting the symbols of Hitchin's $H^1$
generators.

Those generators arise by contracting the linear (on $\mathbf{H}$)
Hamiltonian vector fields with the Chern class $c_1(\Theta_k)\in
H^1(\Omega^1)$ (Theorem~\ref{hitchinthm}). Now, if $c_1 = d\kappa$,
this would be the Poisson bracket with $\kappa$. There is no such
$\kappa\in H^1(\frB;\cO)$, but it \emph{does exist} in the
\emph{analytic} $H^1(\frB;\cO_{an}/\bC)$ (see \S\ref{diffstack}), as
seen from the long exact sequence
\[
\dots \to H^1(\frB;\cO_{an}) \to H^1(\frB;\cO_{an}/\bC) \to
H^2(\frB;\bC) \to H^2(\frB;\cO_{an}) \to \dots
\]
and the vanishing of $H^{>0}(\frB;\cO_{an})$.  This suffices for the
bracket interpretation, because $\bC$ is central: the Poisson bracket
lifts to a commutator operation
\[
[\,\underline{\:\:}\,,\,\underline{\:\:}\,]: H^*(\cO_{an}/\bC)\otimes
H^*(\dif^s) \to H^*(\dif^s),
\]
and similarly to any twisted version of $\dif$.

To avoid a GAGA comparison (which \emph{does} apply to the stack
$\frB$ but requires additional discussion), we describe this operation
in the algebraic category, where a similar operation exists for
classes in the Hodge filtered part $\bH^*\left(\Omega^1\xrightarrow{d}
  \Omega^2 \xrightarrow{d} \dots\right)$ of de Rham
cohomology. Morally, the operation is algebraic because the
\emph{commutator} of a differential operator with the
\emph{anti-derivative in} $\cO_{an}/\bC$ of an algebraic
hyper-cohomology class is algebraic. The precise definition, for a
class $\bH^1$, is the following.  Recall from \S\ref{diffstack} that
such a class $(\eta,\varphi)$ defines a twisted form
$\dif^{(\eta,\varphi)}$ of $\dif$. (We actually need to deform
$\dif^s$ but opt for less notational clutter.) We form the
$1$-parameter family $\dif^{t}$ over $\bC[[t]]$ defined by
$t(\eta,\varphi)$ and consider the long exact sequence
\[
	\dots \to H^p(\dif^t) \xrightarrow{t} H^p(\dif^t)
        \xrightarrow{\pi} H^p(\dif)
		\xrightarrow{\delta} H^{p+1}(\dif^t)
                \xrightarrow{t}\dots
\]
induced from $\dif^t\xrightarrow{t} \dif^t \xrightarrow{\pi}
\dif$. The composition $\pi\circ\delta: H^p(\dif)\to H^{p+1}(\dif)$ is
the desired operation. (The reader will recognize it as the first
differential in a spectral sequence $H^*(\dif)[[t]] \implies
H^*(\dif^t)$ for the deformation $\dif\leadsto \dif^t$.)  Applied to
our Chern class $c_1(\Theta_k)\in H^1(\frB;\Omega^1\to\Omega^2)$, this
lifts Hitchin's contraction on symbols, giving us the desired
generators in $H^1(\frB;\dif^s)$ and concludes the proof.
\end{proof}

\begin{remark}\label{collapse}
  On the Langlands dual side in Theorem~\ref{quantum}, moving away
  from Spin becomes the non-commutative deformation $\frL'$ of $\frL$
  generated by the symplectic form. In the deformed algebra, the
  algebra $\mathrm{Ext}^\bullet_{\frL'}\left(\cO_\frO;\cO_\frO\right)$
  collapses to $\bC$: the first differential of the spectral sequence
  of this deformation becomes de Rham's operator on
  $\Omega^\bullet[\frO]$, under the isomorphism in
  Theorem~\ref{quantum}, and the spectral sequence collapses to $\bC$
  on the second page.  So, for all twists $\dif'\neq\dif^s$ of $\dif$,
  $H^\bullet(\frB;\dif')\cong\bC$.
\end{remark}

We conclude with the technical result promised in the proof of the
proposition above.

\begin{lemma}\label{finite}
  The cohomologies $H^\bullet(\frB;\mathrm{Sym}\,T)$ and
  $H^\bullet(\frB; \dif')$ are computed correctly on some finite union
  of Atiyah-Bott strata of $\frB$, for any twisted form $\dif'$ of
  $\dif$.
\end{lemma}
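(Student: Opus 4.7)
The plan is to invoke the standard Atiyah--Bott/Shatz stratification of $\frB$ by Harder--Narasimhan type, $\frB=\bigsqcup_\mu \frB_\mu$. A foundational fact (Shatz for $\GL_n$, extended by Behrend to arbitrary reductive $G$) is that the instability codimension $c_\mu:=\operatorname{codim}(\frB_\mu,\frB)$ is a proper function of $\mu$: for each $N$ only finitely many $\mu$ satisfy $c_\mu\le N$, so the open substack $\frB_{\le N}\subset\frB$ consisting of bundles of codimension-$\le N$ instability is a finite union of Atiyah--Bott strata and of finite type. The claim I would prove is that for every cohomological degree $q$ the restriction $H^q(\frB;\cF)\to H^q(\frB_{\le N(q)};\cF)$ is an isomorphism for $N(q)$ large enough, and that the bound $N(q)$ can be chosen uniformly with respect to the natural symmetric/order grading on $\mathrm{Sym}\,T$ and $\dif'$.

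The main step is a local-cohomology argument peeled off stratum by stratum. For each smooth locally closed HN stratum $\iota_\mu:\frB_\mu\hookrightarrow\frB$ and a coherent $\cO_{\frB}$-module $\cF$, the excision triangle $\mathbf{R}\Gamma_{\frB_\mu}(\frB;\cF)\to \mathbf{R}\Gamma(\frB;\cF)\to \mathbf{R}\Gamma(\frB\setminus\frB_\mu;\cF)$ places the local cohomology with support in $\frB_\mu$ in degrees $\ge c_\mu$. Iterating this excision across the finitely many strata of codimension $\le N$ yields $H^q(\frB;\cF)\simeq H^q(\frB_{\le N};\cF)$ whenever $N>q$, which handles the case of a single coherent sheaf in a single degree.

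To pass from this to $\mathrm{Sym}\,T\frB$ and $\dif'$, I would exploit that both are filtered with coherent graded pieces $\mathrm{Sym}^r T\frB$, using the standing assumption that $G$ is semi-simple and $g(\Sigma)\ge 2$ which forces $T\frB$ to be concentrated in cohomological degree $0$. The crucial uniformity is that the codimension bound $c_\mu$ depends only on the stratum, not on the coefficient sheaf; hence a single $N(q)$ works simultaneously for all $\mathrm{Sym}^r T\frB$, and equally for any twisted form $\dif'$, whose order-filtered associated graded is independent of the twist.

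The main obstacle I anticipate lies in handling the stacky and infinite-type aspects cleanly: one must check that each $\frB_\mu$ is genuinely smooth inside $\frB$ (so that local cohomology really does vanish below codimension), and that cohomology commutes with the direct sum defining $\mathrm{Sym}\,T\frB$ and with the inductive limit defining $\dif'$. Smoothness reduces, via the classical presentation of $\frB_\mu$ as an iterated affine bundle over the semi-stable moduli of smaller Levi bundles, to the semi-stable/finite-type case where the required local-cohomology bounds are standard; and since the filtrations here are bounded below with coherent graded pieces, cohomology commutes with them degree-by-degree in the symmetric grading, so the passage to the infinite sheaves is automatic.
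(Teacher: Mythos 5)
Your approach and the paper's are genuinely different, and there is a gap in yours that the paper's argument is specifically designed to close. The codimension bound for local cohomology gives $H^q(\frB;\cF)\cong H^q(\frB_{\le N};\cF)$ for each fixed $q$ once $N\ge q+1$; the bound $N(q)$ grows with $q$. The lemma, however, asks for \emph{one} finite union of Atiyah–Bott strata that computes the cohomology in \emph{all} degrees, and this is how it is later used (to be able to run the order-filtration spectral sequence on a finite-type substack). On a stack like $\frB$ that is only locally of finite type, the cohomological dimension of the finite-type opens $\frB_{\le N}$ grows with $N$, so your degree-by-degree truncations never stabilize. Without an independent proof that $H^\bullet(\frB;\dif')$ is concentrated in a bounded range of degrees — which is not available a priori, and would in any case be circular with the spectral-sequence argument the lemma is meant to justify — the codimension estimate alone cannot produce a single sufficient substack.

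What the paper actually shows is much stronger: on all but finitely many strata, the local cohomology of $\mathrm{Sym}\,T$ (and hence of $\dif'$) vanishes in \emph{every} degree, not merely below the codimension. The mechanism is a positivity-of-weights argument, not a depth/codimension count. On a far-out stratum $\frS$ they exhibit the pushdown of $\mathscr{H}^\bullet_{\frS}$ as the $\mathrm{GL}(1)$-invariants of an explicit Chevalley-type complex, and then observe — using the positivity of the weights on $\nu$ and $\frh^\vee$, together with the Riemann–Roch estimate showing $\det\nu$ dominates the negative weights of $\bigwedge\frh$ in genus $\ge 2$ — that there are no $\mathrm{GL}(1)$-invariants at all, so the entire complex, and hence all local cohomology, is zero. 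This is a ``quantization commutes with reduction''-flavored vanishing, not a consequence of Grothendieck's codimension bound, and it is exactly what lets them discard all but finitely many strata outright.

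Two further points. First, your use of the codimension bound tacitly assumes each $\mathrm{Sym}^r T\frB$ has depth equal to the codimension along the stratum, which holds for vector bundles but is not automatic here: $T\frB$ is only the $H^0$ of the tangent complex and is not locally free, so the depth would have to be checked separately. Second, your observation that the bound is uniform in the symmetric/order grading is correct as far as it goes, but it only addresses the coefficient sheaf, not the growth in cohomological degree, which is where the real difficulty lies.
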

\begin{proof}
  We will check the vanishing of local cohomologies on most strata. It
  will help to run the argument for $\mathrm{SL}(2)$ first, since the
  general case requires additional book-keeping.  An unstable
  Atiyah-Bott stratum $\frS$ of $\frB$ classifies bundles $E$ which
  are extensions
\[
0\to E' \to E \to E'' \to 0 
\]
with $\deg E' = -\deg E'' =d>0$. Choose $d\ge g$, and observe that
$\mathrm{Ext}^1_\Sigma(E'';E')=0$, so that $E\cong E'\oplus E''$. The
stratum is then a copy of the Jacobian $J$, modulo the semi-direct
product group $\mathrm{GL}(1)\ltimes \exp(\frh)$, with $\frh:=
\mathrm{Hom}(E'';E')$. The normal bundle is $\nu:=
\mathrm{Ext}^1(E';E'')$, with the obvious $\mathrm{GL}(1)$-action and
trivial action of $\frh$ (although the latter action is not trivial on
higher-order neighborhoods). The local cohomology sheaves
\begin{equation}\label{localcoh}
\mathscr{H}^*_{\frS}\left(\frB;\mathrm{Sym}\,T\right),\quad
\mathscr{H}^*_{\frS}(\frB;\dif')
\end{equation}
can be pushed down to the Jacobian $J$, where they have natural
associated graded sheaves which are resolved by the
$\mathrm{GL}(1)$-invariant part of the Chevalley complex for Lie
algebra cohomology of $\frh$
\begin{equation}
\textstyle{\bigwedge}^\bullet \frh^\vee \otimes \left(\det(\nu)\otimes
  \mathrm{Sym}\,\nu\right)\otimes \mathrm{Sym}\,\nu
\otimes \mathrm{Sym}\,TJ\otimes \textstyle{\bigwedge}^{-\bullet}
\left(\frh\oplus\mathfrak{gl}(1)\right);
\end{equation} 
the factor $\bigwedge\frh^\vee$ computes derived $\frh$-invariants,
$\det\nu\otimes\mathrm{Sym}\,\nu$ are the residues along $\frS$, and
the remaining factors come from the local resolution of the tangent
bundle of $\frB$.

The $\mathrm{GL}(1)$ weights on $\nu$ and $\frh^\vee$ are positive. In
genus $g\ge 2$, $\det(\nu)$ shifts the weights on $\bigwedge\frh$ to
the positive side as well (using Riemann-Roch and cohomology
vanishing), so there are no $\mathrm{GL}(1)$-invariants in
\eqref{localcoh} and the cohomologies with supports on $\frS$ vanish.
 
For a general group, we now repeat the argument with an additional
important observation: all Atiyah-Bott strata $\frS$, save for a
finite number of them, parametrize bundles $F\to\Sigma$ having a
destabilizing parabolic reduction $F_P$ to some $P\subset G$ (with
Levi component $L$, unipotent radical $U$) with the properties that
\[
H^0\left(\Sigma; \mathfrak{ad}_{F_P}(\fru^\vee)\right) = 0, \quad
H^1\left(\Sigma; \mathfrak{ad}_{F_P}(\fru)\right) = 0,
\]  
so that $F_P$ actually \emph{splits} to an $L$-bundle. Note that $F_P$
need \emph{not} be the maximal destabilizing reduction which defines
the stratum $\frS$: the latter will only work if the maximally
destabilizing coweight --- call it $\xi$ --- is sufficiently far
inside the Weyl chamber face which contains it. (Thus, regular
coweights must be some genus-dependent distance away from the walls,
whereas coweights on an edge must be some distance form $0$.)
Instead, we include in $\frp$ all root vectors $e_{\pm\alpha}$ for
simple roots $\alpha$ with $0\le \alpha(\xi) < 2g$. This partitions
the $\xi$'s into neighborhoods of the various faces, and we use the
parabolic associated to the smallest nearby face.

We can now repeat the $\mathrm{SL}(2)$ discussion, with 
\[
\frh:= H^0\left(\Sigma; \mathfrak{ad}_{F_P}(\fru)\right), 
\quad
\nu = H^1\left(\Sigma; \mathfrak{ad}_{F_P}(\fru^\vee)\right),
\] 
using a central $\mathrm{GL}(1)\subset L$ with positive weights on
$\fru$. The weight on $\det \nu$ dominates the one on $\det
\frh^\vee$, because
\[
H^0\left(\Sigma;\mathfrak{ad}_{F_P}(\fru)\right) \subset
H^0\left(\Sigma;\mathfrak{ad}_{F_P}(\fru)
	\otimes K_\Sigma\right) = H^1\left(\Sigma;
        \mathfrak{ad}_{F_P}(\fru^\vee)\right)^\vee = \nu^\vee
\]
whence we conclude the vanishing of cohomologies with supports.
\end{proof}

\begin{remark}
  Our results have analogues in genera $0$ and $1$, with a graded
  version of the Hitchin space with odd part $\bigoplus_m H^1(\Sigma;
  K_\Sigma^{\otimes (m+1)})$. Theorem~\ref{classical} applies, without
  any changes to the proof; however, the proof of Lemma~\ref{finite}
  does not go through.  Although we believe that Theorem~\ref{quantum}
  does hold, the technical deformation step seems to need a finer
  argument.
\end{remark}

\section{Commutativity to all orders and absence of obstructions}
  
We know that $H^\bullet(\frB;\dif^s)$ is generated by a copy of
$\mathbf{H}^\vee$ over $H^0\cong\bC[\frO]$, and that its associated
graded algebra is skew-commutative.  To finish the proof of
Theorem~\ref{quantum}, we need to verify the skew-commutativity of
$H^\bullet(\frB;\dif^s)$. We will prove a much stronger statement in
the same swoop. Namely, the graded algebra
\[
\mathrm{Ext}^\bullet_{\dif^s(\frB)}(\dif^s;\dif^s)
\]
carries the Yoneda $A_\infty$ multiplication. Complete knowledge of
this multiplication captures the formal deformations of $\dif^s$, and
also the localization of the derived category of Spin $\dif$-modules
at the object $\dif^s$.  We will show the strict commutativity of this
algebra.

\begin{theorem}\label{ainfty}
  The Yoneda algebra $\mathrm{Ext}_{\dif^s(\frB)}(\dif^s;\dif^s)$ is
  abstractly $A_\infty$-isomorphic to (the strictly skew-commutative one)
  $\mathrm{Ext}^\bullet_\frL\left(\cO_\frO;\cO_\frO\right)$.
\end{theorem}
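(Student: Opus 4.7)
The plan is to establish formality of both Yoneda $A_\infty$-algebras --- i.e., $A_\infty$-equivalence to their cohomologies with trivial higher multiplications --- and then transport the isomorphism of underlying graded algebras from Theorem~\ref{quantum} across these formality statements.

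The Langlands dual side is formal for geometric reasons. By Proposition~\ref{operfilt}, $\frO$ is a smooth Lagrangian inside the smooth Deligne--Mumford locus of $\frL$, so locally it is cut out by a regular system of parameters and $\cO_\frO$ admits a Koszul resolution over $\cO_\frL$. The sheaf of derived endomorphisms is then quasi-isomorphic to the strictly skew-commutative dg-sheaf $\bigwedge^\bullet N_{\frO/\frL}$, which by the Lagrangian identification becomes $\Omega^\bullet_\frO$. Since $\frO$ is affine, global sections produce an honest strictly skew-commutative dg-algebra realizing $\Omega^\bullet[\frO]$ as $\mathrm{Ext}^\bullet_\frL(\cO_\frO;\cO_\frO)$.

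For the Spin $\dif$-module side I would build a strictly commutative dg-algebra model for $\mathbf{R}\mathrm{End}_{\dif^s(\frB)}(\dif^s)$ by quantizing the Chevalley--de Rham construction used in the proof of Theorem~\ref{classical}. Starting from the presentation $\frB = G[[z]]\backslash\mathbf{X}$ and the two-step resolution of the tangent complex $\frg[\Sigma^o] \xrightarrow{\partial} \frg\pparz/\frg[[z]]$, one obtains a dg-model for $\mathbf{R}\Gamma(\frB;\dif^s)$ whose $E_1$-page factorizes as in \eqref{e1split}. Each factor carries a canonical strictly (skew-)commutative dg-model built from the generators \eqref{oddgen}--\eqref{evengen}, and the leading differentials \eqref{resolve} that resolve these into cohomology are strict algebra maps. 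The Yoneda algebra is thereby exhibited as a quotient of a strictly commutative dg-algebra, hence formal; Kadeishvili's uniqueness of minimal models then upgrades the graded isomorphism of Theorem~\ref{quantum} into the desired $A_\infty$-isomorphism.

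The main obstacle lies in the quantum part of this second step: since $\dif^s$ is not commutative, there is no a priori reason for the cochain-level multiplication on $\mathbf{R}\mathrm{End}_{\dif^s(\frB)}(\dif^s)$ to be strictly commutative, even though its classical limit $\mathrm{Gr}(\dif^s) = \mathrm{Sym}\,T\frB$ manifestly is. The approach would be to show that the obstructions to lifting commutativity through the order filtration vanish, leveraging Theorem~\ref{classical} together with the explicit analytic primitive $\kappa$ of $c_1(\Theta_k)$ constructed in the proof of Proposition~\ref{partialresult}; this $\kappa$ realizes the odd generators at the noncommutative level as inner derivations, which is exactly the input needed for the commutative lift. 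Once such a lift is in hand, $A_\infty$-formality --- and hence the theorem --- follows formally.
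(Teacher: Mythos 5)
Your overall strategy (formality on both sides, then transport the graded isomorphism of Theorem~\ref{quantum}) matches the paper's goal, and your treatment of the Langlands dual side via the Koszul resolution of $\cO_\frO$ is reasonable. But the argument for the $\dif^s$ side has a genuine gap, and it is precisely the gap the paper's proof is designed to fill.

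You correctly identify the obstacle: since $\dif^s$ is noncommutative, there is no reason for the cochain-level product on $\mathbf{R}\mathrm{End}_{\dif^s(\frB)}(\dif^s)$ to be commutative, and the higher $A_\infty$-products on its cohomology could well be nontrivial. You then propose to show that ``the obstructions to lifting commutativity through the order filtration vanish,'' using the analytic primitive $\kappa$ of $c_1(\Theta_k)$ from the proof of Proposition~\ref{partialresult}, on the grounds that $\kappa$ realizes the odd generators as inner derivations. This does not constitute an argument. The existence of $\kappa$ produces the degree-one \emph{generators} $\mathbf{H}^\vee$ of $H^1(\frB;\dif^s)$, but it says nothing about the Massey products or the higher Yoneda operations among them; ``being inner'' at the level of the order filtration does not force the $A_\infty$-structure to be strict, and you have offered no mechanism to kill the would-be obstructions to all orders.

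The paper's actual proof supplies exactly the missing mechanism, and it comes from an input you have not used at all: the \emph{local} picture over the formal disk. Proposition~\ref{localainfty}, via \cite{ft} and \cite{ff}, shows that $H^\bullet(\frg[[z]],\frg;\mathbb{V}_{\on{crit}})$ is $A_\infty$-equivalent to the strictly commutative $\Omega^\bullet[\frO(D)]$. The key step there is the explicit construction of unobstructed formal deformations of $\mathbb{V}_{\on{crit}}$: one takes a family $z(\vep)$ of central elements in $U_{c+\vep k}\,\wh\frg$ for levels near critical, and the limit
\[
\rho_t(\xi) := \lim_{\vep\to 0}\Bigl[\exp\bigl(tz(\vep)/\vep\bigr)\,\rho(\xi)\,\exp\bigl(-tz(\vep)/\vep\bigr)\Bigr]
\]
produces all-order deformations tangent to any exact $1$-form $da$ on $\frO(D)$. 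Combined with a Koszul-duality argument and the Lemma that a Poisson structure admitting unobstructed Lagrangian displacements of the zero-section in all exact directions must be (a constant multiple of) the symplectic one --- the symplectic case being ruled out by Remark~\ref{collapse} --- this proves local strict commutativity. The global theorem is then deduced either by exhibiting $H^\bullet(\frB;\dif^s)$ as an $A_\infty$-quotient of the local commutative algebra via the quantum hypercohomology presentation \eqref{quantumhypercoh} and the exact subcategory $EV$ of extensions of $\mathbb{V}_{\on{crit}}$, or by pushing the local deformations of $\mathbb{V}_{\on{crit}}$ through \eqref{quantumhypercoh} to obtain unobstructed deformations of $\dif^s$ and repeating the Koszul/Poisson argument globally. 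Your proposal contains neither the critical-level deformation construction nor the Koszul-dual Poisson argument, and without one of these the claimed formality of the $\dif^s$ side is unsupported.
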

\noindent
In particular, formal deformations of $\dif^s$ as a $\dif^s$-module
are unobstructed to all orders and correspond to deformations of
$\frO\subset\frL$. Theorem~\ref{ainfty} extends
Corollary~\ref{deformationcor} as follows.

\begin{corollary}
\begin{enumerate}
\item We have an equivalence between the thick closure of $\dif^s$ in
  $\dif^s_{\frB}\mbox{-mod}$ and the full subcategory of sheaves in
  $\cC oh(\frL)$ with set-theoretic support on $\frO$.
\item The derived categories of the above are equivalent to the thick
  closures of $\dif^s$ and $\cO_\frO$ in the derived categories of
  $\dif^s_{\frB}\mbox{-mod}$ and $\cC oh(\frL)$, respectively.
\end{enumerate}
\end{corollary}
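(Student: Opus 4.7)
The strategy is to deduce the corollary from Theorem~\ref{ainfty} via the general principle that a triangulated subcategory generated by a single object is reconstructed, up to equivalence, from the $A_\infty$-endomorphism algebra of that generator. Since Theorem~\ref{ainfty} already identifies the two relevant $A_\infty$-algebras, what remains is to feed them into the standard Koszul/Morita machinery and check that the natural t-structures match.

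First I would invoke a Keller-type theorem: for an object $X$ in a pretriangulated dg- or stable $A_\infty$-category $\cC$, the functor $\mathbf{R}\mathrm{Hom}(X,-)$ induces an equivalence between the thick closure $\langle X\rangle$ of $X$ inside $\cC$ and the category $\mathrm{Perf}\,\mathbf{R}\mathrm{End}_\cC(X)$ of perfect $A_\infty$-modules over the endomorphism algebra. I would apply this twice: once to $X=\dif^s$ in a dg-enhancement of $\mathbf{D}(\dif^s_\frB\mbox{-mod})$, and once to $X=\cO_\frO$ in a dg-enhancement of $\mathbf{D}\cC oh(\frL)$. By Theorem~\ref{ainfty}, the two resulting $A_\infty$-algebras are quasi-isomorphic, hence the two perfect-module categories are equivalent. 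This yields the equivalence of the derived thick closures claimed in (ii).

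For part (i) I would identify the natural abelian hearts on both sides. On the $\cC oh(\frL)$ side, the full subcategory of sheaves with set-theoretic support on $\frO$ is tautologically the abelian thick closure of $\cO_\frO$, since every such sheaf is killed by some finite power of the ideal of $\frO$ and is therefore obtained by a finite iteration of extensions, subobjects, and quotients of $\cO_\frO$. Under the Koszul-type equivalence, this subcategory corresponds to the standard heart inside $\mathrm{Perf}\,\mathbf{R}\mathrm{End}$, namely the finitely generated $H^0\,\mathbf{R}\mathrm{End}_\frL(\cO_\frO)=\bC[\frO]$-modules supported at $\frO$. Using Theorem~\ref{bdthm} to identify $H^0\,\mathbf{R}\mathrm{End}_{\dif^s_\frB}(\dif^s)=\Gamma(\frB;\dif^s)=\bC[\frO]$ on the other side, the corresponding heart matches the abelian thick closure of $\dif^s$, giving (i). The remaining assertion in (ii) — that the derived category of this abelian thick closure coincides with the triangulated thick closure — follows because both endomorphism $A_\infty$-algebras have cohomology concentrated in non-negative degrees, which supplies a canonical bounded t-structure on $\mathrm{Perf}\,\mathbf{R}\mathrm{End}$ whose heart is the abelian category just identified and whose bounded derived category is all of perfect modules.

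The hard part will be providing the Keller-style input in the present stacky setting: one must supply a dg-enhancement of $\mathbf{D}(\dif^s_\frB\mbox{-mod})$ with respect to which $\dif^s$ is a (compact) generator of its thick closure and the Yoneda $A_\infty$-structure on its self-Ext is the one appearing in Theorem~\ref{ainfty}. For $\mathbf{D}\cC oh(\frL)$ this is standard, whereas for $\dif$-modules on the Artin stack $\frB$ of infinite type it requires care. Lemma~\ref{finite} is what makes the argument go through: it reduces all the relevant cohomological and extension computations to a finite-type Atiyah--Bott substack, where the usual dg enhancements and the usual Keller theorem apply verbatim. Once that reduction is in place, the rest of the proof is formal.
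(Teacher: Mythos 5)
The paper states this corollary without a proof, presenting it as a formal consequence of Theorem~\ref{ainfty}, and your overall strategy — Keller/Morita theory for the generator $X$ together with the $A_\infty$-identification of the endomorphism algebras, plus Lemma~\ref{finite} to make the dg-enhancement on the $\frB$ side legitimate — is indeed the natural unpacking of what is implicit in the paper. Your derivation of part~(ii)'s derived equivalence from Theorem~\ref{ainfty} via $\langle X\rangle\simeq\mathrm{Perf}\,\mathbf{R}\mathrm{End}(X)$ is correct.

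However, your identification of the abelian heart in part~(i) has a genuine gap. You claim that under the Keller equivalence the set-theoretically supported sheaves correspond to ``the standard heart inside $\mathrm{Perf}\,\mathbf{R}\mathrm{End}$,'' i.e.\ f.g.\ $\bC[\frO]=H^0\mathbf{R}\mathrm{End}$-modules concentrated in degree zero. This cannot be right: the Keller functor $\mathbf{R}\mathrm{Hom}_\frL(\cO_\frO,-)$ is \emph{not} $t$-exact. Already $\cO_\frO$ itself, which lies in the abelian heart on the $\cC oh(\frL)$ side, maps to $A=\mathbf{R}\mathrm{End}(\cO_\frO)\cong\Omega^\bullet[\frO]$, which has cohomology in all degrees $0,\dots,\dim\mathbf{H}$. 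So the heart of the standard $t$-structure on $\mathrm{Perf}\,A$ (modules with cohomology in degree zero, i.e.\ $\bC[\frO]$-modules) recovers only the scheme-theoretically supported sheaves of Corollary~\ref{deformationcor}, not the set-theoretically supported ones you need. The same issue recurs in your final sentence: ``a canonical bounded $t$-structure $\dots$ whose heart is the abelian category just identified'' is simply not the image of the natural heart under $\mathbf{R}\mathrm{Hom}$.

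To fix this one should not try to identify the hearts intrinsically in terms of the standard $t$-structure on $\mathrm{Perf}\,A$. Rather, match the hearts by bootstrapping from Corollary~\ref{deformationcor}: the derived equivalence sends $\dif^s$ to $\cO_\frO$, hence restricts on the subcategory of $\dif^s$-modules with finite free global presentation to the Beilinson--Drinfeld equivalence with $\cC oh(\frO)$; then argue by induction on the nilpotence filtration, using the identification of $\mathrm{Ext}^1$'s from Theorem~\ref{ainfty} (together with the higher $A_\infty$-products governing obstructions), that every finite successive extension of such objects on one side corresponds to a finite successive extension on the other. This deformation-theoretic propagation is what the paper has in mind when it says Theorem~\ref{ainfty} ``extends'' Corollary~\ref{deformationcor}, and it bypasses the incorrect heart identification.
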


\begin{remark}
  The earlier identification of $H^0, H^1(\frB;\dif^s)$ with
  $\mathrm{Ext}^0, \mathrm{Ext}^1 (\cO_\frO)$ gives only the first map
  in an $A_\infty$ isomorphism. Loosely speaking, we have identified
  the first neighborhood of $\dif^s$ in $\dif^s\mbox{-mod}$ with that
  of $\frO$ in $\frL$, and are now proving that the identification can
  be continued to all orders. We intend to address that and pin the
  isomorphism down canonically in a follow-up paper.
\end{remark}

The proof of Theorem~\ref{ainfty} relies on its \emph{local} version,
in which $\Sigma$ is replaced by a formal disk.
The relevant disk $D$ is the formal neighborhood of
$\sigma\in\Sigma$. Namely, Theorem~4.4 from our previous paper
\cite{ft} gives an isomorphism of skew-commutative algebras
\begin{equation}\label{local}
H^\bullet(\frg[[z]],\frg;\mathbb{V}_{\on{crit}}) \cong \Omega^\bullet[\frO(D)]
\end{equation}
where on the left we have the Lie algebra cohomology with coefficients
in the \emph{vacuum module at critical level} $\mathbb{V}_{\on{crit}}:
= U_{\on{crit}}\,\widehat\frg\otimes_{\frg[[z]]}\bC$ (see \cite{ft}
\S2 for the definitions), while on the right $\frO(D)$ is the variety
of opers on the formal disk. The algebra structure on the left side is
explained by the isomorphisms
\[
H^\bullet(\frg[[z]],\frg;\mathbb{V}_{\on{crit}}) \cong
H^\bullet\left(\frg((z)),\frg;\mathrm{End}(\mathbb{V}_{\on{crit}})\right) \cong
\mathrm{Ext}^\bullet_{HC(\wh\frg,G[[z]])}
\left(\mathbb{V}_{\on{crit}};\mathbb{V}_{\on{crit}}\right),
\]
which are proved in \cite{ft}. The $\mathrm{Ext}$ groups are
computed in a suitably defined category of the Harish-Chandra modules
for the pair $(\widehat\frg,G[[z]])$ of critical level, and the
endomorphism algebra as well as the Lie algebra cochains must be
suitably completed. We refer to \cite{ft} for the requisite technical
details.

For maximum consequence, we need the $A_\infty$ enhancement of
\eqref{local}.
\begin{proposition}\label{localainfty}
  The isomorphism \eqref{local} may be enriched to one of $A_\infty$
  algebras, with the Yoneda structure on the left and the
  skew-commutative structure on the right.
\end{proposition}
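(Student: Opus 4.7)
\emph{Proof plan.} The plan is to upgrade the graded-algebra isomorphism \eqref{local} from \cite{ft} to the $A_\infty$ level. Since $\Omega^\bullet[\frO(D)]$ is strictly skew-commutative (its higher $A_\infty$-operations $m_n$ for $n\ge 3$ vanish), the task reduces to showing that the Yoneda $A_\infty$-algebra $\mathrm{Ext}^\bullet_{HC(\wh\frg,G[[z]])}(\mathbb{V}_{\on{crit}};\mathbb{V}_{\on{crit}})$ is \emph{formal}; combined with the cohomology-level identification, this yields the desired abstract $A_\infty$-isomorphism.

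I would proceed by a weight-purity argument driven by the loop-rotation $\mathbb{G}_m$-action $z\mapsto\lambda z$. First, fix the completed Chevalley--Eilenberg DG model $C^\bullet(\frg((z)),\frg[[z]];\mathrm{End}(\mathbb{V}_{\on{crit}}))$ from \cite{ft}, equipped with cup product from composition in $\mathrm{End}(\mathbb{V}_{\on{crit}})$; this is an honest DGA model for the Yoneda Ext algebra. The loop-rotation action is compatible with every structure map, and the weight grading on its cohomology matches the conformal weight grading on $\Omega^\bullet[\frO(D)]$ under \eqref{local}. A candidate higher operation $m_n$ must preserve $\mathbb{G}_m$-weight while shifting cohomological degree by $2-n$; because the conformal weights of the generators of $\bC[\frO(D)]$ are bounded below by the exponents $m+1$ (for $m\in\exp(\frg)$) and appear only in specific (cohomological degree, conformal weight) sectors, a direct check on generators shows that all higher Massey products would have to land in sectors that are empty in $\Omega^\bullet[\frO(D)]$. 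This is weight-formality in the spirit of Deligne--Griffiths--Morgan--Sullivan.

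A complementary perspective, useful as a sanity check, is to produce a genuinely commutative DGA model of the Yoneda algebra via the Feigin--Frenkel screening/BRST realization of the center of $U_{\on{crit}}\wh\frg$: that construction, enhanced to compute full derived $\frg[[z]]$-invariants rather than just the center, yields a commutative DGA whose cohomology is $\Omega^\bullet[\frO(D)]$ and which zigzags to the Chevalley--Eilenberg model, recovering formality by a different route.

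The chief obstacle I expect is technical: handling carefully the pro-finite topology on cochains of the infinite-dimensional Lie algebra $\frg((z))$, so that the $\mathbb{G}_m$-weight decompositions, homotopy-transfer formulas, and $A_\infty$-structure maps all live internally to the completed Harish--Chandra category of \cite{ft}. The topological groundwork is largely in place there, but one must verify that weight-purity genuinely forbids all surviving higher operations in the completed setting (no ``transcendental'' Massey products slipping in through the completion). Once that is dispatched, the first-order data (the identification of $\mathbf{H}^\vee$ on both sides) pins down the $A_\infty$-isomorphism canonically to first order, consistent with the Remark following Theorem~\ref{ainfty}.
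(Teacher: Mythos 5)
Your approach (formality via weight-purity of the loop-rotation $\mathbb{G}_m$-action) is genuinely different from the paper's, and it has a gap at the decisive step. The claim ``all higher Massey products would have to land in sectors that are empty in $\Omega^\bullet[\frO(D)]$'' is unsupported and, as stated, appears to be false. $\bC[\frO(D)]$ is a polynomial algebra in infinitely many generators of ever-increasing conformal weight, so in \emph{every} cohomological degree $q$ the graded piece $\Omega^q[\frO(D)]$ is nonzero in all sufficiently large weights. A putative $m_n$ on degree-one inputs of total weight $w$ lands in degree $n+2-n = 2$ with weight $w$; there is no shortage of degree-two classes of weight $w$ (e.g.\ $f\,dv_i\wedge dv_j$ with $f$ of the complementary weight), so the target sector is not empty. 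Weight-purity arguments of DGMS type genuinely force vanishing only when the (degree, weight) support of the cohomology is constrained --- say, concentrated on a diagonal or bounded --- which is not the case here. Your ``direct check on generators'' is exactly the missing verification, and I don't see how to make it go through.

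The paper's proof proceeds differently: it exploits strict centrality of $\mathrm{Ext}^0$ (from Feigin--Frenkel) and passes to the Koszul dual over $\mathrm{Ext}^0$, which is a deformation of functions on $T^\vee\frO(D)$ whose leading term is a Poisson bivector. The key observation is a no-go result: any nontrivial such Poisson deformation must obstruct, at some finite order, deformations of the module supported on the zero-section. It then \emph{produces} unobstructed deformations in all exact directions, via the explicit family
$\rho_t(\xi) = \lim_{\vep\to 0}\bigl[\exp(tz(\vep)/\vep)\,\rho(\xi)\,\exp(-tz(\vep)/\vep)\bigr]$
built from a central family $z(\vep)\in U_{c+\vep k}\wh{\frg}$ away from the critical level. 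Nothing in your proposal replaces this construction, which is what actually forces the Poisson structure to vanish. (Your secondary suggestion --- a commutative DGA model via the screening/BRST realization of the Feigin--Frenkel center --- is a reasonable sanity check in spirit, but it is not developed, and it is not what the paper does.) To repair your argument you would need either a genuinely sharper purity statement (not just ``generators in specific sectors'') or a substitute for the paper's deformation-theoretic input.
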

\begin{proof}
The degree zero part of the isomorphism \eqref{local},
\[
H^0(\frg[[z]]; \mathbb{V}_{\on{crit}}) = \mathrm{End}_{\wh\frg} 
	\left(\mathbb{V}_{\on{crit}};
          \mathbb{V}_{\on{crit}}\right)\cong \bC[\frO(D)],
\] 
was established in \cite{ff}, where it is shown to represent the
``negative Fourier half'' of the center of the (critically twisted)
universal enveloping algebra $U_{\on{crit}}\,\wh\frg$, which implies
that $\mathrm{Ext}^0$ is \emph{strictly central} in
$\mathrm{Ext}^\bullet$: that is, the latter is an $A_\infty$ algebra
over $\mathrm{Ext}^0$.

Now, $\mathrm{Ext}^\bullet$ is freely generated in degree $1$ over
$\mathrm{Ext}^0$.  The $A_\infty$ structure on $\mathrm{Ext}^\bullet$
is a deformation of the standard skew-commutative one, deformation
being implemented by degree-scaling.  It is easier to describe this in
a Koszul dual picture, so let us move to the Koszul dual of
$\mathrm{Ext}^\bullet$ over $\mathrm{Ext}^0$.  This is a deformation
of a commutative algebra based on the cotangent bundle of $\frO(D)$,
and it suffices to show that this deformation is trivial. The only
possible deformation is to a non-commutative structure, implemented to
the leading order by a Poisson bi-vector field along the fibers of
$T^\vee(\frO(D))$.

Here is the key observation: any such deformation will obstruct, at
some order, displacements of the module supported at the zero-section
in the cotangent bundle. Indeed, this is just saying that any
deformation of the algebra $A$ of functions on (the neighborhood of
$0$ in) a vector space will obstruct, at some order, the deformation
of the skyscraper sheaf at $0$. For, otherwise, we can identify the
completion of $A$ with the algebra of functions on that (unobstructed)
deformation space, and hence infer the commutativity of $A$. But we
will now show that the zero-section can be displaced, to all orders,
in the direction of any exact one-form in $\Omega^1[\frO(D)]$ (note
that we can choose all generators in $\Omega^1[\frO(D)]$ to be exact).
This will prove the strict commutativity of the
$\mathrm{Ext}$-algebra.

Clearly, it suffices to carry out the deformation in the original
algebra $\mathrm{Ext}$.  (Koszul duality was merely a convenient
depiction of the argument above.)  The zero-section structure sheaf is
$\mathrm{End}\left(\mathbb{V}_{\on{crit}}\right)$. We will now see
that $\mathbb{V}_{\on{crit}}$ can be deformed to all orders with any
initial direction corresponding to an exact form in
$\Omega^1[\frO(D)]$; then, $\mathrm{Hom}_{\wh\frg}\left(
  \mathbb{V}_{\on{crit}}; \mathbb{V}_{\on{crit}}^{\mathrm{def}}
\right)$ will provide the deformation of the zero-section, proving the
proposition.

Choose $a\in\bC[\frO(D)]$. Consider the family of universal enveloping
algebras $U_{c+\vep k}\,\wh\frg$ parametrized by the varying level
$c+\vep k$ away from critical, and choose a family $z(\vep)\in
U_{c+\vep k}\,\wh\frg$, with $z(0)$ a central element corresponding to
$a$. We consider the formal $1$-parameter family of (critical level)
$\wh\frg$-module structures $\rho_t$ on $\mathbb{V}_{\on{crit}}$
deforming the standard one $\rho = \rho_0$, defined by
\[
\rho_t(\xi) :=  \lim_{\vep\to 0}
\left[\exp\left(\frac{tz(\vep)}{\vep}\right) \rho(\xi)
		\exp\left(-\frac{tz(\vep)}{\vep}\right)\right]. 
\]
Because $z(0)$ is central, the limit exists. Clearly, $t=0$ recovers
$\mathbb{V}_{\on{crit}}$. The first order term (in $t$) of the
deformation of $\rho$ is defined by the formula
\[
\lim_{\vep\to 0} \frac{1}{\vep}\left[z(\vep),\xi\right].
\]
As shown in \cite{ft}, (4.3), this class in
$\mathrm{Ext}^1_{HC}\left(\mathbb{V}_{\on{crit}},
  \mathbb{V}_{\on{crit}}\right)$ corresponds to $da$ under the
isomorphism \eqref{local}.
\end{proof}

\begin{remark}
  The first order deformation above is the bracket with $z$ with
  respect to the Poisson structure on the center of
  $U_{\on{crit}}\,\wh\frg$ \cite{ff}.
\end{remark}

We conclude with two (closely related) proofs of Theorem~\ref{ainfty},
both of which rely on its local analogue, Proposition~\ref{localainfty}.


\begin{proof}[First proof of Theorem~\ref{ainfty}]
  We exhibit the Yoneda $\mathrm{Ext}$-algebra as a quotient of the
  strictly commutative one
  $H^\bullet(\frg[[z]],\frg;\mathbb{V}_{\on{crit}})$. Let us choose in
  it a vector subspace of generators of
  $H^\bullet(\frg[[z]],\frg;\mathbb{V}_{\on{crit}})$. Then this
  implements an $A_\infty$ isomorphism of
  $H^\bullet\left(\frB;\dif^s\right)$ with a skew-commutative
  sub-algebra of $H^\bullet (\frg[[z]],\frg;\mathbb{V}_{\on{crit}})$.

  Returning to the presentation $\frB= G[[z]]\backslash \mathbf{X}$,
  the following ``quantum'' version of the construction of
  $\mathrm{Sym}\,T$, from the proof of Theorem~\ref{classical},
  presents instead the cohomology of $\dif^s$:
\begin{equation}\label{quantumhypercoh}
H^q\left(\frB;\dif^s\right) = \bH^q_{G[[z]]}\left(\mathbf{X};
  \left({\textstyle\bigwedge}^\bullet
\frg[\Sigma^o] \otimes \mathbb{V}_{\on{crit}}; \partial\right) \right);
\end{equation}
$\partial$ is the Chevalley differential for the fiber-wise Lie
algebra action of $\frg[\Sigma^o]$ on $\mathbb{V}_{\on{crit}}$,
twisted at the point $\phi\cdot G[\Sigma^o]\in \mathbf{X}$ by the
adjoint action of the loop group element $\phi$. If the genus of $\Sigma$
is $2$ or greater, this complex is a resolution of its bottom (that
is, zeroth) homology, so is in fact equivalent to a $\dif^s$-module.
Taking symbols recovers the ``classical'' presentation in
\eqref{hypercoh}.

There is an obvious action on \eqref{quantumhypercoh} of the center of
the completed enveloping algebra $U_{\on{crit}}\,\wh\frg$
\cite{ff}. This center surjects onto $H^0(\frB;\dif^s)$, and the
resulting description of $\Gamma(\frB;\dif^s)$ is just the
Beilinson--Drinfeld Theorem~\ref{bdthm}. More precisely, the action of
the center factors through the action on \eqref{quantumhypercoh} of
the algebra $\mathrm{End}_{\wh\frg}
\left(\mathbb{V}_{\on{crit}}\right)$, the degree zero part of
\eqref{local}. When identified with functions on $\frO(D)$, central
elements act via their restriction to the subvariety $\frO$ of opers
on $\Sigma$.

We want to extend this action to the entire Yoneda
$\mathrm{Ext}$-algebra of $\mathbb{V}_{\on{crit}}$.  More precisely,
to any higher Yoneda self-extension of $\mathbb{V}_{\on{crit}}$ as a
projective Harish-Chandra module for the pair $(\wh\frg,G[[z]])$,
\begin{equation}\label{yonext}
\mathbb{V}_{\on{crit}} \to E_1 \to \dots \to E_k \to
\mathbb{V}_{\on{crit}} 
\end{equation}
we can functorially associate a self-extension of $\dif^s$ by using
the presentation in \eqref{quantumhypercoh}. When $\Sigma$ has genus
$2$ or greater, this gives a surjection of
$H^\bullet(\frg[[z]],\frg;\mathbb{V}_{\on{crit}})$ onto
$H^\bullet\left(\frB;\dif^s\right)$, because we already get a
surjection at the level of symbols.

A potential difficulty is that the complex in \eqref{quantumhypercoh},
applied to an arbitrary element of our $HC$ category, need not be
concentrated in degree zero. In other words, the functor described by
the degree-zero part of the complex of sheaves in
\eqref{quantumhypercoh}, as a functor from projective
$\wh\frg$-modules to $\dif^s$-modules, is not exact on the full
Harish-Chandra category. It is, however, exact on the full, exact
subcategory $EV$ consisting of finite, successive extensions of
$\mathbb{V}_{\on{crit}}$. (The objects of $EV$ are modules which admit
a finite filtration with associated graded a direct sum of copies of
$\mathbb{V}_{\on{crit}}$.)  Induction on the length shows that on
$EV$, the localization functor \eqref{quantumhypercoh} is concentrated
in degree zero (and exact), so it gives an $A_\infty$ morphism of
$\mathrm{Ext}$-algebras.  Now, because
$\mathrm{Ext}(\mathbb{V}_{\on{crit}};\mathbb{V}_{\on{crit}})$ in the
Harish-Chandra category is generated by $\mathrm{Ext}^1$, and the
commutativity relations are enforced by formal deformations which (to
all orders) belong to $EV$, we may assume that the $E_i$ in
\eqref{yonext} belong to $EV$. In other words, the $\mathrm{Ext}$
group computed in $EV$ agrees with the one in the Harish-Chandra
category. (For a discussion of Yoneda extension classes in exact
categories, see for example \cite{nr}.) With this observation, we do
get the desired $A_\infty$ surjection.
\end{proof}

\begin{proof}[Second proof of Theorem~\ref{ainfty}]
  This argument follows the idea of the proof of
  Proposition~\ref{localainfty}; so we will not repeat all details,
  but only address the parts which need modification. By means of the
  construction \eqref{quantumhypercoh}, the formal deformations of
  $\mathbb{V}_{\on{crit}}$ described in the proof of Thm.~\ref{ainfty}
  lead to formal deformations of $\dif^s$ as $\dif^s$-modules. This
  shows the existence of formal deformations of $\dif^s$ to all
  orders, tangent to any direction in $\mathrm{Ext^1} =
  \Omega^1[\frO]$ which corresponds to an \emph{exact} differential.

  As before, we Koszul-dualize $\mathrm{Ext}_{\dif^s}\left(\dif^s;
    \dif^s\right)$ to obtain a deformation of the formal neighborhood
  of $\frO$ within its cotangent bundle. Again, to leading order, this
  deformation is given by a Poisson bi-vector field. The key
  observation, which concludes the proof, is that \emph{the only
    Poisson structure under which first-order Lagrangian displacements
    of the zero-section are unobstructed to all orders is a constant
    multiple of the standard symplectic form}. This is shown in the
  Lemma below. However, deformation in that particular direction would
  collapse the $\mathrm{Ext}$-algebra to $\bC$: see
  Remark~\ref{collapse}. This is disallowed by our calculation of
  $\mathrm{Ext}^\bullet$: so the Poisson structure vanishes, our
  Koszul dual algebra is strictly commutative, and then so is the
  $A_\infty$ structure on the original $\mathrm{Ext}$-algebra.
\end{proof}

\begin{lemma}
  Let $X\subset Y$ be manifolds (affine algebraic or Stein) of
  dimension $2$ or more, such that the normal bundle of $X$ is
  identified with $T^\vee X$. Let $\alpha$ be a non-trivial Poisson
  structure near $X\subset Y$, for which $X$ is involutive. Assume
  that first-order Lagrangian displacements of $X$ are unobstructed to
  all orders: that is, for every closed differential $\varphi$ on $X$,
  there exists a formal $1$-parameter family of $\alpha$-involutive
  submanifolds of $Y$, deforming $X$, and equal to first order to the
  graph of $\varphi$. Then, in a formal neighborhood of $X$, $Y\cong
  T^\vee X$ in such a way that $\alpha$ is the standard Poisson structure.
\end{lemma}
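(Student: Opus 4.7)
The plan is to work formally around $X$, analyze the coisotropy condition for graphs of closed 1-forms order by order in the fiber direction, and invoke Weinstein's Lagrangian neighborhood theorem once non-degeneracy of $\alpha$ at $X$ is established. Using the given identification $N_X Y \cong T^\vee X$ together with a choice of formal tubular neighborhood, I would fix coordinates $(q^i, p_i)$ on $Y$ near $X$, with $X = \{p=0\}$, and expand
\[
\alpha = \tfrac12 a^{ij}(q,p)\,\partial_{q^i}\wedge\partial_{q^j} + \beta^{ij}(q,p)\,\partial_{q^i}\wedge\partial_{p_j} + \tfrac12 c^{ij}(q,p)\,\partial_{p_i}\wedge\partial_{p_j}.
\]
Involutivity of $X$ gives $c^{ij}(q,0)=0$, and the goal is to show that after a formal diffeomorphism of $Y$ fixing $X$ and a rescaling, $\alpha$ becomes $\sum_i \partial_{q^i}\wedge\partial_{p_i}$.

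First I would extract the first-order consequences of unobstructedness. For any closed 1-form $\varphi = \varphi_i\,dq^i$, the graph $\{p_i = t\varphi_i\}$ must be $\alpha$-involutive modulo $t^2$. A direct computation of $\{p_i - t\varphi_i,\, p_j - t\varphi_j\}$ at order $t^1$ gives
\[
\partial_{p_k}c^{ij}\big|_{p=0}\,\varphi_k + \beta^{ki}(q,0)\,\partial_{q^k}\varphi_j - \beta^{kj}(q,0)\,\partial_{q^k}\varphi_i = 0,
\]
which must hold for every closed $\varphi$. Since the value $\varphi(q_0)$ and the (necessarily symmetric) gradient $\partial\varphi(q_0)$ of a closed 1-form can be prescribed independently at any point, this decouples into $\partial_{p_k}c^{ij}|_{p=0}=0$ (whence $c^{ij}=O(p^2)$) and the algebraic relation that $\beta(q,0)$ commute with every symmetric matrix, forcing $\beta^{ij}(q,0) = \lambda(q)\,\delta^{ij}$ for a scalar $\lambda$. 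Under these conditions the matrix of $\alpha|_X$ has determinant $\pm\lambda(q)^{2\dim X}$, so $\alpha$ is non-degenerate at every point of $X$ where $\lambda \ne 0$.

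The remaining task is to rule out the degenerate case $\lambda \equiv 0$, using nontriviality of $\alpha$, the Jacobi identity, and higher-order obstructions. If $\lambda \equiv 0$, then at order $t^2$ for the family $\{p = t\varphi + t^2\psi^{(2)}\}$ the unknown $\psi^{(2)}$ enters with coefficient proportional to $\beta(q,0)=0$ and therefore cannot absorb the remaining obstruction, which contains the term $a^{kl}(q,0)\,\partial_{q^k}\varphi_i\partial_{q^l}\varphi_j$; vanishing for all closed $\varphi$ forces $a^{ij}(q,0)=0$. Iterating at higher orders in $t$, together with the Jacobi identity for $\alpha$ (which in dimension $\dim X \ge 2$ propagates lower-order vanishings to higher fiber-order ones), collapses $\alpha$ formally to zero, contradicting nontriviality. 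Hence $\lambda$ is nonvanishing; a further formal diffeomorphism fixing $X$ followed by a rescaling of $p$ normalizes $\lambda \equiv 1$. With $\alpha$ then formally symplectic and $X$ Lagrangian, Weinstein's Lagrangian neighborhood theorem, applied in the formal category, identifies $(Y,\alpha)$ near $X$ with $(T^\vee X, d\theta)$ and $\alpha$ with the standard Poisson structure.

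The main obstacle I anticipate is the inductive step that rules out $\lambda \equiv 0$ and, in the degenerate case, kills every higher Taylor coefficient of $\alpha$ at $X$: at each order one must simultaneously track the higher-order terms of the deforming family, the transverse jets of $\alpha$, the Jacobi constraints, and the gauge freedom of formal diffeomorphisms fixing $X$. The conceptually cleanest framework for this bookkeeping is the $L_\infty$ algebra on $\Gamma(X;\bigwedge^\bullet T^\vee X)$ governing coisotropic deformations (Oh--Park, Cattaneo--Felder): the hypothesis that every closed 1-form is a first-order Lagrangian displacement identifies the $L_1$-differential with de Rham, and unobstructedness of each such cocycle forces all higher $L_\infty$-brackets to vanish, which is precisely the statement that $\alpha$ is $L_\infty$-gauge-equivalent to the standard symplectic structure. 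The hypothesis $\dim X \ge 2$ enters both through the Jacobi-propagation step and through the availability of closed 1-forms with prescribed value and symmetric jet at a point.
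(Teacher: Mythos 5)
Your first-order analysis (the commutant computation forcing $\beta^{ij}(q,0)=\lambda(q)\,\delta^{ij}$ and $c=O(p^2)$) matches the linear-algebra core of the paper's argument, and your final appeal to formal Weinstein is the same. But there are two genuine gaps in how you get from there to ``$\alpha$ is symplectic near $X$''.

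First, your case split is incomplete. You establish $\beta(q,0)=\lambda(q)\,\delta$ with $\lambda$ a \emph{function}, and then treat only the two extremes ``$\lambda$ nowhere zero'' and ``$\lambda\equiv 0$''. Nothing in your argument excludes $\lambda$ vanishing on a proper nonempty subset of $X$, in which case $\alpha|_X$ degenerates there and Weinstein cannot be applied. Second, the induction you invoke to kill $\alpha$ when $\lambda\equiv 0$ is asserted but not run; you yourself flag this as ``the main obstacle,'' and the $L_\infty$ remarks (Oh--Park, Cattaneo--Felder) are a plausible framework but are not used to actually close the step.

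The paper sidesteps both problems with one extra idea that is absent from your write-up: it degenerates $Y$ to $T^\vee X$ and replaces $\alpha$ by its leading graded piece $\operatorname{gr}(\alpha)$ in the filtration by order of vanishing along $X$. Rescaling the 1-parameter families shows that the graph of \emph{every} closed 1-form is $\operatorname{gr}(\alpha)$-coisotropic; the pointwise linear algebra (which for a homogeneous bivector also kills the $a$-block, not just $\beta$ and $c$) then gives $\operatorname{gr}(\alpha)=\lambda\cdot\pi_{\mathrm{std}}$ with $\lambda$ a function on $T^\vee X$. Now the decisive step: $\operatorname{gr}(\alpha)$ is automatically Poisson, and for $\dim X\ge 2$ the Jacobi identity forces a function multiple of $\pi_{\mathrm{std}}$ to be a \emph{constant} multiple. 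Nontriviality of $\alpha$ then gives $\lambda\ne 0$ everywhere at once, eliminating both the partial-vanishing case and the need for the higher-order induction. Your approach could be completed along these lines by applying the Jacobi identity to $\operatorname{gr}(\alpha)$ rather than trying to iterate order-by-order in $t$ on $\alpha$ itself; without that, as written the proof does not go through.
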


\begin{proof} 
  Degenerate $Y$ to $T^\vee X$ and retain the leading part
  $\mathrm{gr}(\alpha)$ of $\alpha$. The graphs of all differentials
  are now involutive. Now, it is easy to check that, on the standard
  symplectic vector space, a constant Poisson structure for which all
  standard Lagrangian subspaces are involutive is a multiple of the
  symplectic Poisson structure. So $\mathrm{gr}(\alpha)$ is a
  (point-dependent) multiple of the symplectic Poisson structure on
  $T^\vee X$. But only the \emph{constant} multiples of the standard
  Poisson structure on $T^\vee X$ are Poisson, if $\dim X>1$. So
  $\mathrm {gr}(\alpha)$ is a constant multiple of the standard
  Poisson structure. But then, $Y$ is symplectic near $X$, with $X$
  Lagrangian and the Lemma becomes the formal version of Weinstein's
  tubular neighborhood theorem.
\end{proof}


{\footnotesize \noindent 
DEPARTMENT OF MATHEMATICS, UNIVERSITY OF CALIFORNIA,
  BERKELEY, CA 94720, USA}

\end{document}